\newcommand\dela[1]{}
\def\XXint#1#2#3{{\setbox0=\hbox{$#1{#2#3}{\int}$ }
\vcenter{\hbox{$#2#3$ }}\kern-.6\wd0}}
\newtheorem{theorem}{Theorem}
\numberwithin{equation}{section}
\numberwithin{theorem}{section}
\newtheorem{lemma}[theorem]{Lemma}
\newtheorem{remark}[theorem]{Remark}
\numberwithin{equation}{section}
\newcommand{\wt}{\widetilde}
\newcommand{\e}{\varepsilon}
\def\P{\mathbb{ P}}
\def\E{\mathbb{E}}
\def\P1{\mathbb{P}^{1}}
\def\o{\overline}
\begin{document}

%\showkeys

\title{ Stochastic homogenization for a diffusion-reaction model}
%Averaging of the Brinkman equation coupled with a particle deposition system}

\author[H. Bessaih]{Hakima Bessaih}
\address{University of Wyoming, Department of Mathematics \& Statistics,
East University Avenue, Laramie WY 82071, United States}
\email{ bessaih@uwyo.edu}

\author[Y. Efendiev]{ Yalchin Efendiev}
\address{Department of Mathematics \& ISC, 
Texas A\&M University}
\email{efendiev@math.tamu.edu}

\author[R. F. Maris]{Razvan Florian Maris}
\address{Alexandru Ioan Cuza University of Iasi, Faculty of Economics and Business Administration, Bd. Carol I, 22, 
Iasi 700505, 
Romania}
\email{florian.maris@feaa.uaic.ro}
%\today
\maketitle

{\footnotesize
\begin{center}

%Department... 

\end{center}
}
\begin{abstract}
In this paper, we study stochastic homogenization of a coupled
diffusion-reaction
system. The diffusion-reaction system is coupled to stochastic differential
equations, which govern the changes in the media properties.
Though homogenization with changing media properties has been studied
in previous findings, there is 
little research on homogenization when the media properties
change due to stochastic differential equations. Such processes 
occur in many applications, where the changes in media properties
are due to particle deposition. 
In the paper, we investigate the well-posedness of the nonlinear
fine-grid (resolved) problem and derive limiting equations. 
We formulate the cell problems and derive the limiting equations,
which are deterministic with nonlinear reaction terms. 
The limiting equations involve the invariant measures corresponding
to stochastic differential equations.
These obtained results can play an important role 
for modeling in porous media and
%where the media properties' changes are
%due
%to particle deposition, and
allow the use of simplified and deterministic limiting equations.

\end{abstract}
{\bf Keywords:} Homogenization, Averaging, Invariant measures, Heterogeneous Porous Media, Diffusion-Reaction, Mixing. \\
\\
\\
{\bf Mathematics Subject Classification 2000}: Primary 60H30, 76S05; Secondary 76D07, 76M35. 

\maketitle

\section{Introduction and formulation of the problem}
\label{sec1}

%\subsection{Motivation. }
%\label{subs11}

%In the paper, we derive macroscopic model assuming that the particle dynamics at microscale
%occurs in a much faster time scale compared to the flow. This is typical 
%in these applications due to the particle dynamics and their interaction. %
%We derive
%a macroscopic model where the new upscaled permeability is computed
%using spatial microscale variations of the permeability and fast dynamics. 
%Besides computing the permeability value, we show that the permeability is deterministic. 
%This is a useful findings as it allows to compute the upscaled permeability
%n a deterministic manner and avoid stochastic macroscale PDEs. 

%Even though our application is specific for Brinkman' equations, 
%our mathematical concepts can be used for many important
%applications where the media properties at the microscale are affected
%by SDE. An example can be a diffusion equation with heterogeneous coefficients
%that depend on a field described by SDE. 
%In general, we have 
%\begin{equation}
%\begin{split}
%\dfrac{\partial u^\e}{\partial t} (t, x)= L(a({x\over\e}, v^\e), u^\e)\\
%d v^\e(t, x) = -\dfrac{1}{\e} (v^\e(t, x)-u^\e(t, x)) dt+ \sqrt {\dfrac{Q}{\e}} dW(t, x), 
%\end{split}
%\end{equation}
%where $W(t)$ is a standard Brownian motion. For example, 
%$L(a({x\over\e}, v^\e), u^\e)=\nabla\cdot(a({x\over\e}, v^\e)\nabla u^\e)$. 
%The question of interest is to derive macroscale equations which will be 
%investigated in our future works. 

Fluid flow through a porous media is a subject of wide interest 
that has been widely studied in the past years. It has many applications 
in  real life problems like energy, biology and material sciences 
to quote just a few. These models typically contain many different 
spatial and temporal scales. 
Various phenomena are modeled by partial differential equations 
that include coefficients describing the porosity, permeability 
and diffusion processes.  Though many static problems are well studied
for these applications, the problems with dynamically changing media
properties are much less studied research area. Many research in
this direction includes smoothly and deterministically changing
permeability fields; however, in many real-world
applications, the permeability changes occur due to particle deposition.
This is a challenging problem as the stochastic differential equations
are tightly coupled to porous media equations
and govern the permeability
changes.

In this paper, we consider the following system:
\label{subs12}
\begin{equation}
\label{system1}
\left\{
\begin{array}{rll}
\dfrac{\partial u^\e}{\partial t} (t, x)&= \operatorname{div} \left(A\left(\dfrac{x}{\e}\right) \nabla u^\e(t, x) \right) + \alpha \left(\dfrac{x}{\e}, v^\e(t, x)\right) u^\e(t, x) + f(t, x) &\mbox{ in }\ [0, T] \times D, \\
d v^\e(t, x) &= -\dfrac{1}{\e} (v^\e(t, x)-u^\e(t, x))dt + \sqrt {\dfrac{Q}{\e}} dW(t, x) &\mbox{ in }\ [0, T] \times D, \\
u^\e(t, x) &=0 &\mbox{ on }\ [0, T]\times\partial D, \\
u^\e(0, x) & = u_0^\e(x)&\mbox{ in }\ D, \\
v^\e(0, x) & = v_0^\e(x)&\mbox{ in }\ D, 
\end{array}
\right. 
\end{equation}
where $D$ is a bounded domain of $\mathbb{R}^3$ with a smooth boundary $\partial{D}$,  $u^\e$ is the fluid velocity and and $v^\e$ is the particle velocity. Moreover,  $W(t)$ is an $L^2(D)$-valued standard Brownian motion defined on a complete probability basis $(\Omega, \mathcal{F}, (\mathcal{F}_t)_t, {\mathbb P})$ with expectation $\E$, and $Q$ is a bounded linear operator on $L^2(D)$ of trace class. $u^\e_0$ and $v^\e_0$ are the initial conditions and $f$ is an external force. 

Our model describes an equation with a diffusion $A(y)$ that has 
heterogeneous properties and such that the heterogeneous reaction
$\alpha(y,\cdot)$ is affected by particle deposition in the medium. 
These particles have a faster motion than the motion of the fluid flow and 
are driven by a stochastic perturbation of Brownian type.   
A simpler version of this model has been studied in \cite{BEM1},
 where the diffusion $A$ was considered to be constant and equal to 1. 
The heterogeneous diffusion brings an additional difficulty and makes
this problem more realistic since one deals with heterogeneous permeability
fields in most porous media problems.
 
Our main goal in this paper is to study the asymptotic behavior of the solutions of system \eqref{system1} when $\e\rightarrow 0$. Notice that $u^\e$ is random through the function $\alpha$ that depends on the stochastic process $v^\e$ solution of a stochastic differential equation. Moreover, the function $\alpha$ and the matrix $A=\left(a_{ij}\right)_{1\leq i, j \leq 3}$ are multiscale. Here, $u^\e$ is the slow component and $v^\e$ is the fast one. We will prove that $u^\e$ converges to an averaged velocity $\o{u}$ solution of the averaged equation \eqref{eqou1} where the averaged operators $\o{A}$ and $\o\alpha$ are given by \eqref{homop} and \eqref{defoalpha}. Here, the averages are taken with respect to the periodic variable $y$ and the invariant measure associated to the process $v^\e$ for a frozen $u^\e$ and the averaged operator $\o{A}$ is defined in terms of $\chi$ the solution of the cell problem given in Section 4.

For $\e>0$ fixed, the well posedness of system \eqref{system1} does not follow from classical results and has to be studied accordingly. In this paper, we assume that $\alpha(y,\cdot)$ is bounded and Lipschitz uniformly with respect to the variable $y$. In particular, the uniqueness of solutions is proved by using successive estimates in order to get to apply Gronwall Lemma, see Section 3.1. for more details. 

We prove the existence of weak solutions by using a Galerkin approximation $(u_{n}^\e, v_{n}^\e)$
that is a solution of a well posed system and then pass to the limit on $n$ after performing some uniform estimates in $n$. These estimates are also uniform in $\e$. 
By using our assumption on $\alpha$ and the special form of our system, we are able to prove the uniqueness of the weak solution $(u^\e, v^\e)$. We prove that our weak solution is also strong, and get better uniform estimates in $\e$ for the solution $u^\e$ in the Sobolev space $W^{1, 2}(0, T; L^2(D))$. 

We define the associated cell problem.  Then, we study the asymptotic behavior of the fast motion variable $v^\e$ for a frozen slow motion variable $u^\e$. Indeed, we consider the SDE \eqref{v} for a given $\xi$. It has a mild solution which is also a strong solution. Its transition semigroup $P_t^\xi$ is well defined and has a unique invariant measure 
$\mu^{\xi}$ which is ergodic and strongly mixing. 
The operator $ \alpha^\e$ is defined in Section 3 while the operator 
$\o{\alpha^\e}$ is defined in Section 5 and refers to the average of $\alpha^\e$ wrt to the invariant measure 
$\mu^{\xi}$. The main difficulty in showing the convergence stands in passing to the limit on the term
\begin{equation}\label{term}
\begin{split}
\int_D \left( \alpha^\e(v^\e(t)) u^\e(t) - \o{\alpha}(\o{u}(t)) \o{u}(t) \right)\phi dx
\end{split}
\end{equation}
for $\phi \in H_0^1(D)$, where $\o\alpha$ is defined in \eqref{defoalpha}. 

There is a quite large number of papers dealing with averaging principles for finite dimensional systems in both deterministic and stochastic systems.
%see for e.g. \cite{AN, FW, Ver, kiter} and the references therein to name just a few.
Less has been done in the infinite dimensional setting, we refer to \cite{ Cerrai2009, CF2008} and the references therein.  There is not much in the literature dealing with averaging systems for porous media when spatial heterogeneities are present. We refer to our previous paper \cite{BEM1}, where to our knowledge, it was the first paper where time and spatial scales have been considered for porous media in a stochastic setting.  In \cite{CF2008}, the authors prove an averaging principle for a very general class of stochastic PDEs. 
Our system looks similar to theirs with a very important difference. Our function $(u,v)\to\alpha(\cdot, v) u$ is not Lipchitz and it contains the variable $x/\e$ that describes the heterogeneities of the medium. 
Hence, their results, although very general,
 could not be used in  \cite{BEM1} nor in the current paper.

Our model \eqref{subs12} is a generalization of the model considered in \cite{BEM1} since it contains a diffusion coefficient  $A(\frac{x}{\e})$ that is 
heterogeneous in space. It was equal to 1 in the previous paper  \cite{BEM1}. The presence of this coefficient does not affect the regularity of the solution $(u^\e, v^\e)$ but it does affect the uniform estimates that we can obtain. This is the main reason,  we are not able to use the method previously used in \cite{BEM1} that consisted in applying the It\^o formula on the $\Psi^\e (u^\e,v^\e)$, where 

\begin{equation*}
\Psi^\e (\eta,\xi) = \int_0^\infty e^{-c(\e) t} P_t^\xi \left[ \int_D \left(\alpha^\e(\cdot) - \o{\alpha^\e}(\xi)\right)\xi\phi dx (\eta)\right] dt.
\end{equation*}
Isolating the term \eqref{term}, a uniform estimate in $H^{2}(D)^3$ was needed to be able to pass to the limit. 
Unfortunately, while  the random variable $u^\e$ in \cite{BEM1} was uniformly bounded in the Sobolev space $H^{2}(D)^3$, in the current paper it is only uniformly bounded in the Sobolev space $H^{1}(D)^3$.

Instead, the limit in the term $\eqref{term}$ will be performed by using a Khasminskii type argument, following an idea already introduced in \cite{Cerrai2009} where as mentioned earlier,  our term $\alpha (\cdot, v^\e)u^\e$ does not satisfy the same assumptions. Hence, their method can't be adapted as is for the model \eqref{subs12}. In particular,  we need to apply the semigroup $P_t^\xi$ to a function
of the form 

\begin{equation*}\label{deffe}
F^\e(s, \eta)= \displaystyle\int_D \alpha^\e(\eta) u^\e \left(\e s\right)dx
\end{equation*}
and use the asymptotic properties of the semigroup, that is summarized in Lemma 5.4. The results of this lemma are not surprising but we were not able to find it in the literature. We believe that this is a nice new result that can be applied for other models.  

By using the uniform estimates obtained in Section 3 on the variable $u^\e$, a tightness argument 
and some known results for periodic functions, see \cite{A-2s} (lemma 1. 3) the passage to the limit is performed in distribution. We obtain a convergence in probability by using the fact that the limit $\o{u}$ is deterministic. 

The paper is organized as follows, Section 2 is dedicated to the introduction of the functional setting and assumptions. In Section 3, system \eqref{system1} is analyzed for every $\e>0$. In particular existence of strong solutions are established with their uniqueness and their uniform estimates with respect to $\e$.  We introduce the cell problem in Section 4. The fast motion variable $v^\e$ is analyzed in Section 5 where some known results are summarized with some references. In this section, the important Lemma 5.4 is given and proven in details since this is a crucial tool used to pass to the limit in the system. The passage to the limit is performed in Section 6. Furthermore, the well posedness of the averaged equation is established. 
 
\section{Preliminaries and Assumptions}
\label{sec2}
We make the following notations for spaces that will be used throughout the paper. 
For any two Hilbert spaces $X$ and $Y$, with norms denoted by $\|\cdot\|_X$ and $\|\cdot\|_Y$, $C(X, Y)$ denotes the space of continuous functions, and $C_b(X, Y)$ the Banach space of bounded and continuous functions
$\phi : X\to Y$ endowed with the supremum norm:
$$\|\phi\|_{C_b(X, Y)} = \sup_{x\in X} \|\phi (x)\|_{Y}. $$

For any $\phi\in C^u(X, Y)$, the subspace of uniformly continuous functions defined on $X$ with values in $Y$, we denote by $[\phi]_{C^u(X, Y)} : (0, \infty) \to \mathbb{R}$, the modulus of uniform continuity of $\phi$:
$$[\phi]_{C^u(X, Y)} (r) = \sup_{0 < \|x- y\|_X \leq r} \|\phi (x) - \phi(y)\|_Y, $$
with
$$\lim_{r\to 0} [\phi]_{C_b^u(X, Y)} (r) =0. $$
$Lip(X, Y)$ denotes the space of Lipschitz functions defined on $X$ with values in $Y$, for $\phi \in Lip(X, Y)$ we denote by $[\phi]_{Lip(X, Y)}$ the Lipschitz constant of $\phi$:
$$[\phi]_{Lip(X, Y)} = \sup_{x\neq y} \frac {\|\phi (x) - \phi(y)\|_Y}{\| x-y\|_X}. $$
We notice that for any $\phi \in Lip(X, Y)$ we have:
\begin{equation}\label{lip}
\|\phi (x)\|_Y \leq \|\phi (x) - \phi(0)\|_Y + \|\phi(0)\|_ Y\leq [\phi]_{Lip(X, Y)} \|x\|_X + \|\phi(0)\|_Y \leq ([\phi]_{Lip(X, Y)}+\|\phi(0)\|_Y) (1+\|x\|_X), 
\end{equation}
so the space will be naturally equipped with the norm
\begin{equation}\label{lipnorm}
\|\phi\|_{Lip(X, Y)} = \|\phi(0)\|_Y + [\phi]_{Lip(X, Y)}. 
\end{equation}
To simplify the notations, when there is no confusion we omit the use of subscripts from the notations, and we simply write $\|x\|$, $\|\phi\|$, $[\phi](r)$, $[\phi]$. Also if $Y=\mathbb{R}$ we omit it from the notations, and the spaces are denoted by $C(X)$, $C_b(X)$, $C^u(X)$, and $Lip(X)$. 

For $Y=[0, 1]^3$ the space $C_\#(Y)$ denotes the space of continuous functions on $Y$ that are $Y$-periodic and the space $L^2_\#(Y)$ denotes the closure of $C_\#(Y)$ in $L^2(Y)$. 

We will denote by $\langle\cdot, \cdot\rangle$ the inner product in $L^2(D)$. If we identify $L^2(D)$ with its dual $(L^2(D))'$ then we have the Gelfand triple $H_0^1(D)\subset L^2(D) \subset H^{-1}(D)$ with continuous injections. The dual pairing between $H_0^1(D)$ and $H^{-1}(D)$ will be also denoted by $\langle\cdot, \cdot\rangle$. 

We now give the assumptions for the system \eqref{system1} . 

The function $\alpha:Y\times\mathbb{R}\to\mathbb{R}$ satisfies the following conditions:

i) For any $\eta\in\mathbb{R}$ the function $\alpha(\cdot, \eta)$ is measurable. 

ii) For almost every $y\in Y$, the function $\alpha(y, \cdot)$ is bounded and Lipschitz, uniformly with respect to $y$. 

We notice that the function $\o{\alpha}:\mathbb{R}\to\mathbb{R}$, $\displaystyle\o{\alpha}(\eta) = \int_Y \alpha (y, \eta) dy$ is Lipschitz and bounded. 

The matrix $A=\left(a_{ij}\right)_{1\leq i, j \leq 3} \in L^\infty(Y; \mathbb{R}^{3\times 3})$ is strictly positive and bounded uniformly in $y\in Y$, i. e. there exist $0< m < M$ such that
\begin{equation}\label{A}
m \xi^2 \leq A(y)\xi \xi \leq M\xi^2, 
\end{equation}
for almost every $y\in Y$ and $\xi \in \mathbb{R}^3$. 

Throughout the paper, we assume that $f\in L^2(0, T;L^2(D))$ and $u^\e_0, v^\e_0 \in L^2(D)$ and that 
$W(t)$ is an $L^2(D)$-valued standard Brownian motion defined on a complete probability basis $(\Omega, \mathcal{F}, \mathcal{F}_t, {\mathbb P})$, where the filtration $ \mathcal{F}_t=\sigma\left\{W(s),\quad s\leq t\right\}$.

\section{Study of the system \eqref{system1}}
\label{sec3}
In this section we prove the existence and uniqueness of the solution of the system \eqref{system1} as well as some uniform estimates. 

\subsection{Well-posedness of the system \eqref{system1}}
For any $\e>0$ we denote by $A^\e$ the matrix 
\begin{equation}\label{Ae}
A^\e: \mathbb{R}^3 \to \mathbb{R}^{3\times 3}, \ \ A^\e(x) = A\left( \dfrac{x}{\e} \right), 
\end{equation}
and by $\alpha^\e$ the operator, 
\begin{equation}\label{alpha}
\alpha^\e: L^2(D) \to L^\infty(D), \ \ \alpha^\e(\eta) (x) = \alpha\left(\dfrac{x}{\e}, \eta(x)\right). 
\end{equation}
Let us show that $\alpha^\e$ is a well defined operator. Given that $\alpha$ is bounded, we need only to show the measurability in $x$ of $\alpha^\e(\eta)$ for any $\eta \in L^2(D)$. For such a function, we consider a sequence $\eta_n \in C_0(D)$ convergent to $\eta$ pointwise in $D$. The function $(y, x) \to \alpha (y, \eta_n(x))$ is a Carath\'{e}odory function, measurable in $y$ and continuous in $x$, so $x\to \alpha\left(\dfrac{x}{\e}, \eta_n(x) \right)$ is measurable, and by the Lipschitz condition of $\alpha$ is pointwise convergent to $\alpha^\e(\eta)$, which shows that $\alpha^\e(\eta)$ is measurable. Moreover we have the following existence and uniqueness result:

\begin{theorem}\label{thexun}
Assume that $u^\e_0\in L^2(D)$ for every $\e>0$, then for each $T>0$, with the possibility of changing the probability space, there exists a unique $\mathcal{F}_t$ - measurable solution of the system \eqref{system1}, $u^\e \in L^\infty(\Omega;C([0, T];L^2(D))\cap L^2(0, T;H_0^1(D)))$ and $v^\e \in L^2(\Omega;C([0, T];L^2(D))$ in the following sense: $\mathbb{P}$ a. s. 
\begin{equation}
\label{weaksole}
\int_D u^\e(t) \phi dx - \int_D u^\e_0 \phi dx + \int_0^t \int_D A^\e\nabla u^\e (s) \nabla \phi dx ds =\int_0^t \int_D \alpha^\e(v^\e) u^\e \phi dx ds + \int_0^t \int_D f(s) \phi dx ds, 
\end{equation}
for every $t\in[0, T]$ and every $\phi \in H_0^1(D)$, and
\begin{equation}
\label{mildsole}
v^\e(t) = v^\e_0 e^{-t/\e} +\frac{1}{\e} \int_0^t u^\e(s) e^{-(t-s)/\e} ds +\frac{\sqrt{Q}}{\sqrt{\e}} \int_0^t e^{-(t-s)/\e} dW(s). 
\end{equation}

Moreover, if the initial conditions $u^\e_0$ are uniformly bounded in $L^2(D)$, then the solutions $u^\e$ satisfies the estimates:
\begin{equation}
\label{est1}
\sup_{\e > 0} \| u^\e \|_{L^\infty (\Omega;L^2(0, T;H_0^1(D)))} \leq C_T, 
\end{equation}
\begin{equation}
\label{est2}
\sup_{\e > 0} \| u^\e \|_{L^\infty (\Omega;C([0, T];L^2(D)))} \leq C_T, 
\end{equation}
and
\begin{equation}
\label{est3}
\sup_{\e > 0} \left\| \dfrac{\partial u^\e}{\partial t} \right\|_{L^\infty (\Omega;L^2(0, T;(H^{-1}(D))))} \leq C_T. 
\end{equation}
Also, if the initial conditions $v^\e_0$ are uniformly bounded in $L^2(D)$ we also have the estimate for $v^\e$:
\begin{equation}
\label{est4}
\sup_{\e > 0} \E \sup_{t\in[0, T]} \| v^\e(t)\|^2_{L^2 (D)} \leq C_T. 
\end{equation}
\end{theorem}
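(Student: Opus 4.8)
The plan is to build the solution by a Galerkin scheme, extract a priori bounds that rely on the boundedness of $\alpha$ together with the uniform ellipticity \eqref{A}, pass to the limit by a compactness argument, and close with a uniqueness estimate that exploits the very special coupling of the system. I would fix a smooth orthonormal basis $(e_j)_{j\ge 1}$ of $L^2(D)$ lying in $H_0^1(D)$ (e.g.\ the Dirichlet eigenfunctions of $-\Delta$), set $V_n=\operatorname{span}\{e_1,\dots,e_n\}$ with $L^2$-projection $P_n$, and look for $u_n^\e(t)=\sum_{j=1}^n c_j^n(t)e_j$ solving the projection of \eqref{weaksole} onto $V_n$, coupled with $v_n^\e$ given by the mild formula \eqref{mildsole} with $u^\e$ replaced by $u_n^\e$ and $W$ by $P_nW$. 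Since $\alpha$ is bounded and Lipschitz, the drift of the resulting finite-dimensional It\^o system is continuous and locally Lipschitz, which gives a unique local solution; the energy estimate below promotes it to a global one.

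The a priori estimates are the core, and they must be uniform in both $n$ and $\e$. Testing the (pathwise) $u_n^\e$-equation with $u_n^\e$, using \eqref{A} in the form $\langle A^\e\nabla u_n^\e,\nabla u_n^\e\rangle\ge m\|\nabla u_n^\e\|_{L^2}^2$, the bound $\|\alpha\|_\infty\le C_\alpha$ to control $\langle\alpha^\e(v_n^\e)u_n^\e,u_n^\e\rangle\le C_\alpha\|u_n^\e\|_{L^2}^2$, and Young's inequality on $\langle f,u_n^\e\rangle$, I obtain
\[
\tfrac12\tfrac{d}{dt}\|u_n^\e\|_{L^2}^2+m\|\nabla u_n^\e\|_{L^2}^2\le\big(C_\alpha+\tfrac12\big)\|u_n^\e\|_{L^2}^2+\tfrac12\|f\|_{L^2}^2.
\]
Because the $u$-equation carries no stochastic integral, this inequality is pathwise, so Gronwall yields \eqref{est2} and, after integrating in time, \eqref{est1}, both in $L^\infty(\Omega)$ and independent of $n,\e$ (the constants $m$ and $C_\alpha$ do not depend on $\e$). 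Estimate \eqref{est3} follows by reading $\partial_t u_n^\e$ off the equation: in $H^{-1}(D)$ one has $\|\partial_t u_n^\e\|_{H^{-1}}\le M\|\nabla u_n^\e\|_{L^2}+C_\alpha\|u_n^\e\|_{L^2}+\|f\|_{L^2}$, using $L^2(D)\hookrightarrow H^{-1}(D)$ for the reaction term, and squaring and integrating invokes \eqref{est1}--\eqref{est2}. For \eqref{est4} I would use \eqref{mildsole}: the deterministic convolution is bounded in $L^2(D)$ by $\sup_s\|u_n^\e(s)\|_{L^2}$ since $\tfrac1\e\int_0^t e^{-(t-s)/\e}ds\le 1$, while the stochastic convolution is handled by a maximal (factorization/Burkholder) inequality together with $\operatorname{Tr}Q<\infty$.

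I then let $n\to\infty$. Estimates \eqref{est1}--\eqref{est3} give weak-$*$ compactness of $u_n^\e$ in $L^2(0,T;H_0^1(D))$ with $\partial_t u_n^\e$ bounded in $L^2(0,T;H^{-1}(D))$, so by the Aubin--Lions lemma $u_n^\e$ is relatively compact in $L^2(0,T;L^2(D))$; combined with tightness of the laws of $v_n^\e$, Prokhorov's theorem and the Skorokhod representation theorem produce a subsequence converging almost surely on a new probability space (hence the ``possibility of changing the probability space''). The only delicate passage is the nonlinear reaction term: the strong $L^2$ convergence of $u_n^\e$, the almost sure convergence of $v_n^\e$, and the continuity and boundedness of $\alpha$ give $\alpha^\e(v_n^\e)u_n^\e\to\alpha^\e(v^\e)u^\e$, which identifies the limit as a solution of \eqref{weaksole}--\eqref{mildsole}.

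Uniqueness follows from the structure of the coupling. For two solutions with the same data, the difference $w=u_1^\e-u_2^\e$ solves a deterministic equation, and crucially the noise and initial datum cancel in the $v$-difference, so $v_1^\e-v_2^\e=\tfrac1\e\int_0^t w(s)e^{-(t-s)/\e}ds$ and therefore $\sup_{s\le t}\|(v_1^\e-v_2^\e)(s)\|_{L^2}\le\sup_{s\le t}\|w(s)\|_{L^2}$. Testing the $w$-equation with $w$, the term $\langle(\alpha^\e(v_1^\e)-\alpha^\e(v_2^\e))u_2^\e,w\rangle$ is estimated by the Lipschitz bound on $\alpha$, the embedding $H_0^1(D)\hookrightarrow L^6(D)$ in dimension three, and Young's inequality, which absorbs the top-order factor into the coercive term $m\|\nabla w\|_{L^2}^2$ and leaves a right-hand side controlled by $\|w\|_{L^2}^2$ (times the integrable weight $\|u_2^\e\|_{H_0^1}^2$) and by $\|v_1^\e-v_2^\e\|_{L^2}^2$; feeding in the $v$-difference bound and applying Gronwall's lemma forces $w\equiv 0$. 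The step I expect to be the main obstacle is keeping all bounds uniform in $\e$ while passing the non-Lipschitz term $\alpha^\e(v_n^\e)u_n^\e$ to the limit; in particular the maximal estimate \eqref{est4} for the fast Ornstein--Uhlenbeck stochastic convolution, whose $1/\e$ scaling must be handled with care, is the most technical point.
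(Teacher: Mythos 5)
Your proposal follows essentially the same route as the paper's proof: a Galerkin scheme, pathwise energy estimates that are uniform in both $n$ and $\e$ (possible because the $u$-equation carries no stochastic integral, $\alpha$ is bounded, and the ellipticity constants in \eqref{A} are $\e$-independent), Aubin--Lions compactness combined with Prokhorov/Skorokhod representation (which is exactly where the ``possibility of changing the probability space'' enters in the paper as well), identification of the nonlinear term via strong $L^2$ convergence of $u_n^\e$ plus the Lipschitz property of $\alpha$, and a uniqueness argument exploiting the cancellation of the noise and the initial data in the difference of the $v$-components, closed by Gr\"onwall with the integrable weight $\|\nabla u_1^\e\|^2_{L^2(D)^3}$. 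The only structural difference is cosmetic: the paper subtracts the Ornstein--Uhlenbeck process $w^\e$ of \eqref{w} and works with $z_n^\e=v_n^\e-w^\e$, so that its Galerkin system is a pathwise random ODE, whereas you keep a finite-dimensional It\^o system with projected noise; both deliver the same estimates, and your $L^6$-based uniqueness bound is interchangeable with the paper's $L^4$-based one.

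The one step in your plan that would actually fail, if pursued literally, is \eqref{est4}. No factorization/Burkholder maximal inequality can produce a bound on $\E\sup_{t\in[0,T]}\|v^\e(t)\|^2_{L^2(D)}$ that is uniform in $\e$: rescaling time by $t=\e\tau$ shows that the stochastic convolution $\frac{1}{\sqrt{\e}}\int_0^t e^{-(t-s)/\e}\sqrt{Q}\,dW(s)$ has, as a process on $[0,T]$, the law of a unit-speed Ornstein--Uhlenbeck process run over the long interval $[0,T/\e]$, and the second moment of the running maximum of such a process grows like $\log(T/\e)$ (project onto a top eigendirection of $Q$ to see this). So the quantity in \eqref{est4} actually diverges logarithmically as $\e\to 0$; what is available uniformly in $\e$ is only $\sup_{t\in[0,T]}\E\|v^\e(t)\|^2_{L^2(D)}\le C_T$, i.e.\ the supremum outside the expectation. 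You correctly flagged the $1/\e$ scaling of this term as the most delicate point, but the proposed remedy does not resolve it. To be fair, this is a defect shared with the paper itself, whose ``proof'' of \eqref{est4} is a one-line assertion; note also that the later arguments never use \eqref{est4} in this strong form --- the averaging step (the bound \eqref{l1} and the induction in Lemma \ref{converg}) only requires moment bounds at fixed times, and tightness is needed only for $u^\e$.
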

\begin{proof}
We prove the existence of solutions through a Galerkin approximation procedure. We consider $(e_k)_{k\geq 1}$ a sequence of linearly independent elements in $H_0^1(D)\cap L^\infty(D)$ such that $span\{e_k\ | \ k\geq 1\}$ is dense in $H_0^1(D)$. We define the $n$-dimensional space $H_0^1(D)_n$ for every $n>0$ as $span\{e_k\ | \ 1\leq k \leq n\}$ and we denote by $\Pi_n$ the projection operator from $L^2(D)$ onto $H_0^1(D)_n$. 

Let us denote by $w^\e(t)$ the following process
\begin{equation}\label{w}
w^\e(t)=e^{-t/\e}v_0^\e+\frac{\sqrt{Q}}{\sqrt{\e}} \int_0^t e^{-(t-s)/\e} dW(s) \in L^2(\Omega;C([0, T];L^2(D)). 
\end{equation}

Now, in order to prove the existence of solutions, we define the Galerkin approximation 
$$(u^\e_n(t, \omega), z^\e_n(t, \omega) )\in H_0^1(D)_n\times H_0^1(D)_n$$ 
a. s. $\omega\in\Omega$, solution of the following system

\begin{equation}
\label{weaksolen}
\int_D\frac{\partial u^\e_n} {\partial t} (t) \phi dx + \int_D A^\e \nabla u^\e (t) \nabla \phi dx = \int_D \alpha^\e(z^\e_n(t)+w^\e(t)) u_n^\e(t) \phi dx + \int_D f(t) \phi dx, 
\end{equation}
for every $\phi \in H_0^1(D)_n$, $u^\e_n(0, \omega) =\Pi_n u^\e_0$, 
\begin{equation}
\label{mildsolen}
\frac{\partial z^\e_n}{\partial t}(t) = -\frac{1}{\e}(z^\e_n(t)-u^\e_n(t)), \quad z^\e_n(0)=0, 
\end{equation}
where 
\begin{equation}
\label{vne}
z^\e_n(t)=v^\e_n (t)- w^\e(t). 
\end{equation}

Then, we pass to the limit on $(u^\e_n, z^\e_n )$ when $n \to \infty$. 

We write $u_n^\e(\omega, t, x) = \sum_{k=1}^n a^\e_k(\omega, t) e_k(x)$ and 
$z_n^\e(\omega, t, x) = \sum_{k=1}^n b^\e_k(\omega, t) e_k(x)$, and get the following system for the coefficients $a^\e_k$ and $b^\e_k$:
\begin{equation}
\label{weaksolen'}
\left\{
\begin{array}{rll}
& \displaystyle\sum_{k=1}^n\frac{\partial a^\e_k}{\partial t}(\omega, t) \displaystyle\int_D e_k e_l dx+\sum_{k=1}^n a^\e_k(\omega, t) \int_D a_{ij}\left(\dfrac{x}{\e}\right)\dfrac{\partial e_k}{\partial x_j} \dfrac{\partial e_l}{\partial x_i} dx &-\\
&\displaystyle \sum_{k=1}^n\int_D a^\e_k(\omega, t) \alpha^\e\left(w^\e(\omega, t)+\sum_{k=1}^n b_k^\e(\omega, t) e_k\right) e_k e_ldx&= \displaystyle \int_D f(t) e_l dx, \\
\\
& \displaystyle\frac{\partial b^\e_k}{\partial t}(\omega, t)&=-\dfrac{1}{\e}\left( b^\e_k - a_k^\e \right), \ 1\leq k \leq n\\
\\
&a_k^\e(\omega, 0)&=\displaystyle\int_D u^\e_0 e_k dx, \ 1\leq k \leq n\\
\\
&b_k^\e(\omega, 0)&=0, \ 1\leq k \leq n
\end{array}
\right. 
\end{equation}
for each $1\leq l \leq n$. 
We make the following notations: $$b_{ij}=\displaystyle\int_D e_i(x) e_j(x) dx, \ c^\e_{ij}=\displaystyle\ \int_D \sum_{p=1}^{ n}\sum_{ q=1}^{ n}a_{pq}\left(\dfrac{x}{\e}\right)\dfrac{\partial e_i}{\partial x_q} \dfrac{\partial e_j}{\partial x_p} dx, \ f_j(s) = \int_D f(s, x) e_j(x) dx, $$
and
$$(F^\e_n)_{ij}(\omega, t, b_1, ... b_n)= \int_D \alpha^\e\left(w^\e(\omega, t)+\sum_{k=1}^{n} b_k e_k\right) e_i e_jdx$$
and the system is written with these notations as:
\begin{equation}
\label{weaksolen''}
\left\{
\begin{array}{rll}
& \displaystyle\sum_{k=1}^n\frac{\partial a^\e_k}{\partial t} b_{kl}+\sum_{k=1}^n a^\e_k c^\e_{kl}- \sum_{k=1}^n a^\e_k (F^\e_n)_{kl}(b_1^\e, ..., b_n^\e)&= f_l(t), \\
\\
& \displaystyle\frac{\partial b^\e_k}{\partial t}&=-\dfrac{1}{\e}\left( b^\e_k - a_k^\e \right), \ 1\leq k \leq n\\
\\
&a_k^\e(0)&=\displaystyle\int_D u^\e_0 e_k dx, \ 1\leq k \leq n\\
\\
&b_k^\e(0)&=0, \ 1\leq k \leq n
\end{array}
\right. 
\end{equation}
for each $1\leq l \leq n$. 
Given the linearly independence of the sequence $(e_{k})_{k\geq 1}$, the form of the functions $(F^\e_n)_{ij}$ and the Lipschitz condition satisfied by $\alpha$, the system has for every $T>0$ an unique $\mathcal{F}_t$ - measurable solution $(a^\e_k)_{1\leq k \leq n}, \ (b^\e_k)_{1\leq k \leq n} \in C([0, T];L^\infty(\Omega))$, with $(a^\e_k)_{1\leq k \leq n}, \ (b^\e_k)_{1\leq k \leq n} \in W^{1, 2}(0, T)$ a. s. $\omega\in\Omega$. This means that $u_n^\e$ and $z_n^\e=v_n^\e-w^\e$ is a. s. a solution for:
\begin{equation}
\label{weaksolen'''}
\left\{
\begin{array}{rll}
&\displaystyle\int_D\frac{\partial u^\e_n} {\partial t} (t) \phi dx + \int_D A^\e \nabla u_n^\e (t) \nabla \phi dx - \int_D \alpha^\e(z^\e_n(t)+w^\e(t)) u_n^\e(t) \phi dx &= \displaystyle\int_D f(t) \phi dx, \\
&d z_n^\e &=-\dfrac{1}{\e}\left( z^\e_n - u_n^\e \right), \\
\\
&u_n^\e(0)&=\Pi_n u^\e_0, \\
\\
&z_n^\e(0)&=0, 
\end{array}
\right. 
\end{equation}
for every $\phi\in H_0^1(D)_n$. We take $\phi= u_n^\e$ in \eqref{weaksolen'''} to derive that a. e. $\omega\in\Omega$ :

\begin{equation}\nonumber
\begin{split}
\frac{\partial}{\partial t} \|u^\e_n \|^2_{L^2(D)} &\leq \|f(t)\|^2_{L^2(D)} + C \|u^\e_n \|^2_{L^2(D)}\Rightarrow\\
 \|u^\e_n \|^2_{L^2(D)} &\leq e^{Ct} \left( \|f\|_{L^2(0, T;L^2(D))} + \|u^\e_0\|_{L^2(D)}\right), 
\end{split}
\end{equation}
so
\begin{equation}
\label{estune1}
\sup_{n>0}\| u_n^\e\|_{L^\infty(0, T;L^2(D))} \leq C_T(1+\|u^\e_0\|_{L^2(D)}). 
\end{equation}
We also obtain based on the positivity of $A$ that
\begin{equation}\nonumber
\begin{split}
\int_0^T m\|\nabla u^\e_n \|^2_{L^2(D)^3} ds +\frac{1}{2} \| u_n^\e(T)\|^2_{L^2(D)}&\leq \int_0^T \int_D f(t) u_n^\e dxdt + \frac{1}{2}\|u^\e_0 \|^2_{L^2(D)}+\int_0^T C\| u^\e_n \|^2_{L^2(D)} ds\Rightarrow\\
\int_0^T m \|\nabla u^\e_n \|^2_{L^2(D)^3} ds&\leq T \|f\|_{L^2(0, T;L^2(D))}\| u_n^\e\|_{L^\infty(0, T;L^2(D))}+ C_T(1+\|u^\e_0\|_{L^2(D)}), 
\end{split}
\end{equation}
so 
\begin{equation}
\label{estune2}
\sup_{n>0}\| u_n^\e\|_{L^2(0, T;H_0^1(D))} \leq C_T(1+\|u^\e_0\|_{L^2(D)}). 
\end{equation}
The estimates \eqref{estune1} and \eqref{estune2} imply using the first equation of the system \eqref{weaksolen'''} that
\begin{equation}
\label{estune3}
\sup_{n>0}\left\|\dfrac{\partial u_n^\e}{\partial t}\right\|_{L^2(0, T;(H_0^1(D)_n)')} \leq C_T(1+\|u^\e_0\|_{L^2(D)}). 
\end{equation}

This means that the sequence $u_n^\e$ is bounded in $L^2(0, T;H_0^1(D))\cap W^{1, 2}(0, T;H^{-1}(D))$ which is compactly embedded in $L^2(0, T;L^2(D))$ (Theorem 2. 1, page 271 from \cite{temam}) and in $C([0, T], H^{-1}(D))$. Hence, there exists a 
subsequence $u_{n'}^\e$ that converges in distribution in $L^2(0, T;L^2(D))\cap C([0, T], H^{-1}(D))$ to some $u^\e$ which is also a weak limit in $L^2(0, T;H_0^1(D)) \cap W^{1, 2}(0, T;H^{-1}(D))$ and a weak$^*$ limit in $L^\infty(0, T;L^2(D))$. So using Lemma 1. 2, page 260 from \cite{temam} a. s. $\omega\in\Omega$, 
$u^\e \in L^2(0, T;H_0^1(D))\cap C ([0, T];L^2(D))\cap W^{1, 2}(0, T;H^{-1}(D))$. 

We also have from \eqref{weaksolen'''} that
$$z^\e_{n'}(t)=\displaystyle\frac{1}{\e}\int_0^t e^{-(t-s)/\e} u^\e_{n'}(s) ds$$
will converge in distribution to $z^\e(t)=\displaystyle\frac{1}{\e}\int_0^t e^{-(t-s)/\e} u^\e(s) ds$ in $C([0, T];L^2(D))$. Skorokhod representation theorem gives us the existence of another probability space $(\widetilde\Omega, \widetilde{\mathcal{F}}, \widetilde{\mathcal{F}}_t, \widetilde{\mathbb{P}})$ with expectation $\widetilde{\E}$, $\widetilde{W}(t)$ an $L^2(D)$-valued standard Brownian motion on $(\widetilde\Omega, \widetilde{\mathcal{F}}, \widetilde{\mathcal{F}}_t, \widetilde{\mathbb{P}})$ identically distributed as $W(t)$, a subsequence $u^{\e}_{n''}$ and a sequence $\widetilde{u^{\e}}_{n''}$ defined on $\widetilde{\Omega}$, with the same distribution in $L^2(0, T;L^2(D))\cap C([0, T], H^{-1}(D))$, in $L^2(0, T;H_0^1(D)) \cap W^{1, 2}(0, T;H^{-1}(D))$ equipped with the weak topology and in $L^\infty(0, T;L^2(D))$ equipped with the weak$^*$ topology that converges pointwise to an element $\widetilde{u^\e}$ with the same distribution as $u^\e$. We remark that the sequence $\widetilde{{u^{\e}}}_{n''}$ is $\widetilde{\mathcal{F}}_t$ - measurable in $H^{-1}(D)$ and that 
$$\widetilde{z^\e}_{n''}(t)=\displaystyle\frac{1}{\e}\int_0^t e^{-(t-s)/\e} \widetilde{u^\e}_{n''}(s) ds$$
is identically distributed as $z^\e_{n''}(t)$ in $C([0, T];L^2(D))$ and converges pointwise and in distribution to 
 $\widetilde{z^\e}(t)=\displaystyle\frac{1}{\e}\int_0^t e^{-(t-s)/\e} \widetilde{u^\e}(s) ds$. Also the process
$$
\widetilde{w}^\e(t)=e^{-t/\e}v_0^\e+\frac{\sqrt{Q}}{\sqrt{\e}} \int_0^t e^{-(t-s)/\e} d\widetilde{W}(s) \in L^2(\Omega;C([0, T];L^2(D)). 
$$
is identically distributed as $w^\e(t)$. 

We now pass to the limit when $n''\to\infty$ in the first equation of the system \eqref{weaksolen'''} in expected value. We integrate over $[0, t]$ and get:
\begin{equation}\label{eq4}
\begin{split}
&\E \left|\int_0^t\int_D\frac{\partial u^\e_{n''}} {\partial t} \phi dx ds-\int_0^t\int_D\frac{\partial u^\e} {\partial t} \phi dx ds \right| +\E \left| \int_0^t \int_D A^\e \nabla u_{n''}^\e \nabla \phi dxds-\int_0^t \int_D A^\e \nabla u^\e \nabla \phi dxds\right|\\
+&\E\left| - \int_0^t \int_D \alpha^\e(z^\e_{n''}+w^\e) u_{n''}^\e \phi dxds+\int_0^t \int_D \alpha^\e(z^\e+w^\e) u^\e \phi dxds\right| \to 0, 
\end{split}
\end{equation}
when $n''\to\infty$ which gives
\begin{equation}\label{eq5}
\begin{split}
&\widetilde\E \left|\int_0^t\int_D\frac{\partial \widetilde{ u^\e}_{n''}} {\partial t} \phi dx ds-\int_0^t\int_D\frac{ \partial \widetilde{u^\e}} {\partial t} \phi dx ds \right| + \widetilde\E \left| \int_0^t \int_D A^\e \nabla \widetilde{u}^\e_{n''} \nabla \phi dxds-\int_0^t \int_D A^\e \nabla \widetilde{u^\e} \nabla \phi dxds\right|\\
+& \widetilde\E\left| - \int_0^t \int_D \alpha^\e( \widetilde{z^\e}_{n''}+ \widetilde{w^\e}) \widetilde{u^\e}_{n''} \phi dxds+\int_0^t \int_D \alpha^\e(\widetilde{z^\e}+\widetilde{w^\e})\widetilde{ u^\e} \phi dxds\right| \to 0, 
\end{split}
\end{equation}
when $n''\to\infty$. 

In \eqref{eq5} we pass to the limit pointwise in $\widetilde\omega\in\widetilde\Omega$ using the convergences of the sequences $u^\e_{n''}$ and $\dfrac{\partial u^\e_{n''}}{\partial t}$:
$$\lim_{n''\to\infty} \int_0^t \int_D\frac{\partial \widetilde{u^\e}_{n''}} {\partial t} \phi dx ds= \int_0^t\int_D\frac{\partial \widetilde{u^\e}} {\partial t} \phi dxds$$
and
$$\lim_{n''\to\infty} \int_0^t\int_D A^\e \nabla \widetilde{u^\e}_{n''} \nabla \phi dx ds= \int_0^t\int_D \nabla \widetilde{u^\e} \nabla \phi dx ds. $$

Also
\begin{equation}\nonumber
\begin{split}
\left|\int_0^t \int_D \alpha^\e(\widetilde{z^\e}_{n''}+\widetilde{w^\e}) \widetilde{u^\e}_{n''} \phi dxds- 
\int_0^t\int_D \alpha^\e(\widetilde{z^\e}+\widetilde{w^\e}) \widetilde{u^\e} \phi dx \right| &\leq \\
\left| \int_0^t\int_D \alpha^\e(\widetilde{z^\e}_{n''}+\widetilde{w^\e}) (\widetilde{u^\e}_{n''}-\widetilde{u^\e}) \phi dxds \right|
+ \left|\int_0^t\int_D \left(\alpha^\e(\widetilde{z^\e}_{n''}+\widetilde{w^\e}) -\alpha^\e(\widetilde{z^\e}+\widetilde{w^\e})\right) \widetilde{u^\e} \phi dxds\right| &\leq \\
C \left(\int_0^t\int_D ( \widetilde{u^\e}_{n''}- \widetilde{u^\e})^2 dxds\right)^{1/2}+ C \int_0^t\int_D \left| \widetilde{z^\e}_{n''} - \widetilde{z^\e}\right| | \widetilde{u^\e}| |\phi | dxds& \leq \\
C \| \widetilde{ u^\e}_{n''}- \widetilde{u^\e}\|_{L^2(0, T;L^2(D))} + C \int_0^T \| \widetilde{z^\e}_{n''} - \widetilde{z^\e} \|_{L^2(D)} \| \widetilde{ u^\e} \|_{L^2(D)} \| \phi \|_{L^\infty(D)}, 
\end{split}
\end{equation}
so we obtain that a. s. 
$$\lim_{n''\to\infty} \int_0^t\int_D \alpha^\e( \widetilde{z^\e}_{n''}+ \widetilde{w^\e}) \widetilde{u^\e} _{n''}\phi dxds = 
\int_0^t\int_D \alpha^\e( \widetilde{z^\e}+ \widetilde{w^\e}) \widetilde{u^\e} \phi dx ds. $$ 
We use these convergences and \eqref{eq5} to obtain in the limit:
\begin{equation}
\label{weaksolen''''}
\left\{
\begin{array}{rll}
&\displaystyle\int_0^t \int_D\frac{\partial \widetilde{u^\e}} {\partial t} \phi dxds + \int_0^t \int_D A^\e \nabla \widetilde{u^\e}\nabla \phi dxds - \int_0^t \int_D \alpha^\e( \widetilde{z^\e}+ \widetilde{w^\e}) \widetilde{u^\e}\phi dxds &= \displaystyle \int_0^t \displaystyle\int_D f \phi dxds, \\
&d \widetilde{z^\e} &=-\dfrac{1}{\e}\left( \widetilde{z^\e} - \widetilde{ u^\e } \right), \\
\\
& \widetilde{u^\e}(0)&=u^\e_0, \\
\\
& \widetilde{z^\e}(0)&=0, 
\end{array}
\right. 
\end{equation}
pointwise in $\widetilde\omega \in \widetilde\Omega$ for every $\phi\in H_0^1(D)_n$, so by density it is true for any $\phi \in H_0^1(D)$. Now, let 
$ \widetilde{v^\e}:= \widetilde{z^\e}+ \widetilde{w^\e}$, then we deduce that $( \widetilde{u^\e}, \widetilde{v^\e})$ is a solution for our initial system in the sense given by \eqref{weaksole} and \eqref{mildsole}. The solution $( \widetilde{u^\e}, \widetilde{v^\e})$ is $ \widetilde{\mathcal{F}}_t$ - measurable as the limit of the Galerkin approximation $( \widetilde{u^\e}_{n''}, \widetilde{v^\e}_{n''})$ which is $ \widetilde{\mathcal{F}}_t$ - measurable by construction. 
Furthermore, given the uniform estimates for $u^\e_0$ it is easy to obtain from \eqref{estune1}--\eqref{estune3} the estimates \eqref{est1}--\eqref{est3} and \eqref{est4} follows from the uniform bounds for $v^\e_0$. 

Now, we prove the uniqueness. Let us assume that we have two solutions $\{u^\e_1, v^\e_1\}$ and $\{u^\e_2, v^\e_2\}$ for the system. Then, 
\begin{equation}\nonumber
\begin{split}
\int_D (u^\e_2(t) -u^\e_1(t)) \phi dx + \int_0^t \int_D A^\e(\nabla u^\e_2 -\nabla u^\e_1) \nabla \phi dx ds =
\int_0^t \int_D (\alpha^\e(v^\e_2) u^\e_2 - \alpha^\e(v^\e_1) u^\e_1)\phi dx ds, 
\end{split}
\end{equation}
and
\begin{equation}
\nonumber
v^\e_2(t)-v^\e_1(t) = \frac{1}{\e} \int_0^t (u^\e_2(s)-u^\e_1(s)) e^{-(t-s)/\e} ds. 
\end{equation}
we take $\phi = u^\e_2 - u^\e_1$ and we get:
\begin{equation}
\nonumber
\begin{split}
\int_D (u^\e_2(t) -u^\e_1(t))^2 dx + \int_0^t \int_D A^\e(\nabla u^\e_2 -\nabla u^\e_1)^2 dx ds =\\
\int_0^t \int_D \alpha^\e(v^\e_2) (u^\e_2 - u^\e_1)^2dx ds + \int_0^t \int_D (\alpha^\e(v^\e_2)-\alpha^\e(v^\e_1) )u^\e_1 (u^\e_2 - u^\e_1)dx ds\leq \\
c\int_0^t \|u^\e_2 - u^\e_1\|_{L^2(D)}^2ds + c\int_0^t \int_D |v^\e_2-v^\e_1| |u^\e_1| u^\e_2-u^\e_1| dx ds \leq \\
c\int_0^t \|u^\e_2 - u^\e_1\|_{L^2(D)}^2ds + c \int_0^t \|\alpha^\e(v^\e_2)-\alpha^\e(v^\e_1) \|_{L^2(D)} \|u^\e_1 \|_{L^4(D)}\|u^\e_2 - u^\e_1 \|_{L^4(D)}ds \leq \\
c\int_0^t \|u^\e_2 - u^\e_1\|_{L^2(D)}^2ds + c \left(\int_0^t \|v^\e_2-v^\e_1 \|^2_{L^2(D)} \|u^\e_1 \|^2_{L^4(D)}ds \right)^{1/2} \left(\int_0^t \|u^\e_2 - u^\e_1 \|^2_{L^4(D)}ds\right)^{1/2}\leq \\
c\int_0^t \|u^\e_2 - u^\e_1\|_{L^2(D)}^2ds + c \int_0^t \|v^\e_2-v^\e_1 \|^2_{L^2(D)} \|\nabla u^\e_1 \|^2_{L^2(D)^3}ds +\dfrac{m}{2} \int_0^t \|\nabla u^\e_2 - \nabla u^\e_1 \|^2_{L^2(D)^3}ds\leq\\
c\int_0^t \|u^\e_2 - u^\e_1\|_{L^2(D)}^2ds + c \int_0^t \|v^\e_2-v^\e_1 \|^2_{L^2(D)} \|\nabla u^\e_1 \|^2_{L^2(D)^3}ds +\dfrac{m}{2} \int_0^t \|\nabla u^\e_2 - \nabla u^\e_1 \|^2_{L^2(D)^3}ds, 
\end{split}
\end{equation}
where we used H\"{o}lder's inequality, the imbedding of $H_0^1(D)$ into $L^4(D)$ and the Lipschitz condition of $\alpha$. 
\begin{equation}\nonumber
\begin{split}
\|v^\e_2(t)-v^\e_1(t) \|^2_{L^2(D)} \leq c \int_0^t \| u^\e_2(s)-u^\e_1(s)\|^2
_{L^2(D)} e^{-2(t-s)/\e}ds\\
 \leq cT \sup_{s\in [0, t]} \| u^\e_2(s)-u^\e_1(s)\|^2_{L^2(D)}, 
\end{split}
 \end{equation}
so we obtain:
\begin{equation}
\nonumber
\begin{split}
\sup_{s\in[0, t]}\|u^\e_2(t) - u^\e_1(t)\|^2_{L^2(D)} \leq 
c \int_0^t \sup_{r\in [0, s]} \| u^\e_2(r)-u^\e_1(r)\|^2_{L^2(D)} \left( \| \nabla u^\e_1(s) \|^2_{L^2(D)^3}+1\right)ds. 
\end{split}
\end{equation}
We use Gr\"{o}nwall's lemma for the function $\sup_{s\in[0, t]}\|u^\e_2(t) - u^\e_1(t)\|^2_{L^2(D)}$ to obtain that:
$$\sup_{s\in[0, t]}\|u^\e_2(t) - u^\e_1(t)\|^2_{L^2(D)} \leq \|u^\e_2(0) - u^\e_1(0)\|^2_{L^2(D)} e^{c \displaystyle \int_0^t \left(1+\|\nabla u^\e_1 \|^2_{L^2(D)^3}\right)ds}, $$
which gives the uniqueness and this completes the proof. 
\end{proof}

\begin{theorem}
\label{threg}
Assume that the initial conditions $u_0^\e$ are uniformly bounded in $H_0^1(D)$. Then the solution $u^\e \in L^\infty (\Omega;L^2(0, T;H^2(D))) \cap L^\infty (\Omega; C([0, T];H^1_0(D)))$ and satisfies the improved uniform estimates:
%\begin{equation}
%\label{est1'}
%\sup_{\e > 0} \| u^\e \|_{L^\infty (\Omega;L^2(0, T;H^2(D)))} \leq C_T, 
%\end{equation}
\begin{equation}
\label{est2'}
\sup_{\e > 0} \| u^\e \|_{L^\infty (\Omega;C([0, T];H_0^1(D)))} \leq C_T, 
\end{equation}
and
\begin{equation}
\label{est3'}
\sup_{\e > 0} \left\|\dfrac{ \partial u^\e}{\partial t} \right\|_{L^\infty (\Omega;L^2(0, T; L^2(D))} \leq C_T. 
\end{equation}
\begin{proof}
To show these estimates we go back to the Galerkin approximation used to show the existence. In the system \eqref{weaksolen'''} we take $\phi=\dfrac{\partial u_{n}^\e}{\partial t} (t)$ and get
\begin{equation}\nonumber
\displaystyle\int_D\left| \frac{\partial u^\e_n} {\partial t} (t)\right|^2 dx + \int_D A^\e \nabla u_n^\e (t) \nabla\frac{\partial u^\e_n} {\partial t} (t) dx \leq C \left\| \frac{\partial u^\e_n} {\partial t} (t)\right\|_{L^2(D)}\left(\|f(t)\|_{L^2(D)} + \|u_n^\e (t)\|_{L^2(D)} \right). 
\end{equation}
We integrate on $[0, t]$ and use the estimates already obtained for $u^\e_n$ to get:
\begin{equation}\nonumber
\int_0^t \left\| \frac{\partial u^\e_n} {\partial t} (s)\right\|^2_{L^2(D)}ds + m \left\| \nabla u^\e_n(t)\right\|^2_{L^2(D)} \leq M\left\| \nabla u^\e_n(0)\right\|^2_{L^2(D)} + C\int_0^t \left\| \frac{\partial u^\e_n} {\partial t} (s)\right\|_{L^2(D)}, 
\end{equation}
and from here
$$\sup_{\e>0} \sup_{n>0} \int_0^t\left\| \frac{\partial u^\e_n} {\partial t} (s)\right\|^2_{L^2(D)}ds \leq C_T, $$
and
$$\sup_{\e>0} \sup_{n>0} \sup_{t\in[0, T]}\left\| \nabla u^\e_n(t)\right\|^2_{L^2(D)} \leq C_T, $$
which will give us by passing to the limit on the subsequence $u^\e_{n'}$ \eqref{est3'} and 
\begin{equation}
\nonumber
\sup_{\e > 0} \| u^\e \|_{L^\infty (\Omega;L^\infty(0, T;H_0^1(D))))} \leq C_T, 
\end{equation}
We use now the first equation from \eqref{system1} and the regularity theorem for the stationary Stokes equation from \cite{temam} to obtain $u^\e\in L^\infty (\Omega;L^2(0, T;H^2(D)))$. We get \eqref{est2'} by using Lemma 1. 2, Section 1. 4 from \cite{temam}. 
\end{proof}
\end{theorem}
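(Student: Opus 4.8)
The plan is to return to the Galerkin approximation $(u_n^\e, z_n^\e)$ constructed in the proof of Theorem~\ref{thexun} and to test the first equation of \eqref{weaksolen'''} against $\phi = \dfrac{\partial u_n^\e}{\partial t}(t)$, which is an admissible element of $H_0^1(D)_n$ since $u_n^\e(t)=\sum_{k=1}^n a_k^\e(t)e_k$ with coefficients $a_k^\e\in W^{1,2}(0,T)$. This is the classical device for trading one degree of spatial regularity for one degree of temporal regularity, and it is essential to work at the finite-dimensional level because $\partial_t u^\e$ is not yet known to be regular enough to be inserted directly into the weak formulation \eqref{weaksole}. The structural fact that makes the scheme work is that $A^\e$ does not depend on $t$, so that, using the symmetry of $A$, the diffusion term becomes an exact time derivative of the Dirichlet energy:
\begin{equation}\nonumber
\int_D A^\e \nabla u_n^\e \cdot \nabla \frac{\partial u_n^\e}{\partial t}\,dx = \frac{1}{2}\frac{d}{dt}\int_D A^\e \nabla u_n^\e \cdot \nabla u_n^\e\,dx.
\end{equation}

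For the right-hand side I would use that $\alpha$ is bounded, whence $\|\alpha^\e(v_n^\e)u_n^\e\|_{L^2(D)}\le C\|u_n^\e\|_{L^2(D)}$, and combine Cauchy--Schwarz with Young's inequality to absorb a term $\tfrac12\|\partial_t u_n^\e\|_{L^2(D)}^2$ into the left-hand side. Integrating on $[0,t]$ and applying the ellipticity bounds \eqref{A}, which yield $m\|\nabla u_n^\e(t)\|_{L^2(D)}^2\le\int_D A^\e\nabla u_n^\e(t)\cdot\nabla u_n^\e(t)\,dx$ from below and $\int_D A^\e\nabla u_n^\e(0)\cdot\nabla u_n^\e(0)\,dx\le M\|\nabla u_n^\e(0)\|_{L^2(D)}^2$ at the initial time, one controls the quantity $\int_0^t\|\partial_t u_n^\e\|_{L^2(D)}^2\,ds + m\|\nabla u_n^\e(t)\|_{L^2(D)}^2$ by $M\|\nabla\Pi_n u_0^\e\|_{L^2(D)}^2$ together with lower-order contributions.

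The reason these bounds are uniform in both $n$ and $\e$ rests on three inputs that are already $\e$-uniform: the estimate \eqref{estune1} on $\|u_n^\e\|_{L^\infty(0,T;L^2(D))}$ from Theorem~\ref{thexun}; the hypothesis that $u_0^\e$ is \emph{uniformly} bounded in $H_0^1(D)$, which bounds $\|\nabla u_n^\e(0)\|_{L^2(D)}=\|\nabla\Pi_n u_0^\e\|_{L^2(D)}$ uniformly in $n$ and $\e$ (choosing, if needed, $(e_k)$ so that $\Pi_n$ is $H_0^1$-stable); and $f\in L^2(0,T;L^2(D))$. Passing to the limit along the subsequence $u_{n'}^\e$ transfers these bounds to $u^\e$, which delivers \eqref{est3'} and $\sup_{\e>0}\|u^\e\|_{L^\infty(\Omega;L^\infty(0,T;H_0^1(D)))}\le C_T$.

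To reach the $H^2$ regularity I would read the first line of \eqref{system1} as a stationary, divergence-form elliptic equation for a.e.\ fixed $(t,\omega)$: since $\partial_t u^\e$, $\alpha^\e(v^\e)u^\e$ and $f$ all belong to $L^2(D)$, the identity $-\operatorname{div}(A^\e\nabla u^\e)= -\partial_t u^\e+\alpha^\e(v^\e)u^\e+f$ has an $L^2(D)$ right-hand side, and elliptic regularity for the Dirichlet problem with bounded measurable coefficients promotes $u^\e$ to $H^2(D)$, giving the $L^\infty(\Omega;L^2(0,T;H^2(D)))$ membership. The continuity statement \eqref{est2'}, i.e.\ $u^\e\in C([0,T];H_0^1(D))$, then follows from $u^\e\in L^\infty(0,T;H_0^1(D))$ and $\partial_t u^\e\in L^2(0,T;L^2(D))$ by a standard interpolation lemma. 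The delicate point in the whole argument is the handling of the diffusion term: the clean energy identity above requires the symmetry of $A$, for otherwise an antisymmetric remainder of the form $\int_D(A^\e-(A^\e)^T)\nabla u_n^\e\cdot\nabla\partial_t u_n^\e\,dx$ survives, which cannot be absorbed since no bound on $\nabla\partial_t u_n^\e$ is available; I would therefore assume $A$ symmetric, a hypothesis that is standard in homogenization and fully consistent with the ellipticity condition \eqref{A}.
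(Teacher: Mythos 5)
Your proposal follows the paper's own proof essentially step for step: return to the Galerkin system \eqref{weaksolen'''}, test with $\phi=\partial u_n^\e/\partial t$, integrate in time using the ellipticity bounds \eqref{A} and the bound \eqref{estune1}, pass to the limit along the subsequence to get \eqref{est3'} and the $L^\infty(0,T;H_0^1(D))$ bound, then invoke stationary elliptic regularity for the $H^2$ membership and a Temam-type interpolation lemma for \eqref{est2'}. Two of your side remarks deserve comment. First, your observation that the identity $\int_D A^\e\nabla u_n^\e\cdot\nabla\partial_t u_n^\e\,dx=\tfrac12\tfrac{d}{dt}\int_D A^\e\nabla u_n^\e\cdot\nabla u_n^\e\,dx$ requires $A$ symmetric is correct and exposes something the paper passes over in silence: the paper's displayed inequality, with $m\|\nabla u_n^\e(t)\|^2_{L^2(D)}$ on the left and $M\|\nabla u_n^\e(0)\|^2_{L^2(D)}$ on the right, uses exactly this identity, yet the paper nowhere assumes symmetry; on the contrary, it introduces the adjoint matrix $A^*$ and the adjoint cell problem \eqref{cellpr*}, which are only relevant when $A\neq A^*$. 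So either symmetry is tacitly assumed, or the antisymmetric remainder must be handled by some other device; your explicit hypothesis is an honest fix, and the same goes for your (also legitimate) remark about the $H_0^1$-stability of $\Pi_n$. Second, your claim that $H^2$ regularity follows from elliptic theory ``with bounded measurable coefficients'' is an overstatement: with merely $L^\infty$ coefficients, solutions of $-\operatorname{div}(A^\e\nabla u)=g\in L^2(D)$ are in general \emph{not} in $H^2(D)$ (one only gets De Giorgi--Nash--Moser/Meyers-type regularity), so this step needs $A$ at least Lipschitz, say. Note, however, that the paper's own citation of the stationary Stokes regularity theorem suffers from the same defect, since that theorem does not cover variable $L^\infty$ coefficients; so on this point your argument is no weaker than the paper's, merely afflicted by the same gap.
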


\section{The cell problem}
\label{sec4}
%Calculation:
%\begin{equation}
%u^\e(x, y, t, s) = u^\e\left(x, \dfrac{x}{\e}, t, \dfrac{t}{\e}\right) = u_0+\e u_1+e^2 u_2
%\end{equation}
%\begin{equation}
%\dfrac{\partial u^\e}{\partial t} = e^{-1} \dfrac{\partial u_0}{\partial s} + \left(\dfrac{\partial u_0} {\partial t} + \dfrac{\partial u_1} {\partial s}\right)
%\end{equation}
%

In this section we introduce $\chi : Y \to \mathbb{R}^3$ the solution of the cell problem that corresponds to the system \eqref{system1}:
\begin{equation}
\label{cellpr}
\left\{
\begin{array}{rll}
\operatorname{div} \left(A(y) \left( I + \nabla \chi(y)\right)\right) &= 0 &\mbox{ in } Y, \\
\chi & - Y periodic, \\
\end{array}
\right. 
\end{equation}
as well as the solution of the adjoint equation $\chi^*$:
\begin{equation}
\label{cellpr*}
\left\{
\begin{array}{rll}
\operatorname{div} \left(A^*(y) \left( I + \nabla \chi^*(y)\right)\right) &= 0 &\mbox{ in } Y, \\
\chi^* & - Y periodic, \\
\end{array}
\right. 
\end{equation}
where $A^*$ is the adjoint of $A$, $A^*=(a^*_{ij})_{1\leq i, j \leq 3}$, $a^*_{ij} = a_{ji}$ for $1 \leq i, j \leq 3$. It follows that $\chi^{\e}(y)=\chi\left(\dfrac{y}{\e}\right)$ is the solution for the equation:
\begin{equation}
\label{cellpre}
\left\{
\begin{array}{rll}
\operatorname{div} \left(A^\e(y) \left( I + \e\nabla \chi^\e(y)\right)\right) &= 0 &\mbox{ in } \e Y, \\
\chi^\e& - \e Y periodic, \\
\end{array}
\right. 
\end{equation}

We define now the homogenized operator $\o{A}$ as
\begin{equation}
\label{homop}
\o{A}=\int_Y A(y) \left( I + \nabla \chi(y)\right) dy. 
\end{equation}

\section{The fast motion equation}
\label{sec5}

In this section, we present some facts for the invariant measure associated
with (\ref{v}). 
We consider the following problem for fixed $\xi \in L^2(D)$:
\begin{equation}
\label{v}
\left\{
\begin{array}{ll}
dv^\xi &= - (v^\xi-\xi)dt + \sqrt{Q} dW, \\
v^\xi(0) &= \eta. 
\end{array}
\right. 
\end{equation}
This equation admits a unique mild solution $v^\xi(t)\in L^2(\Omega; C([0, T];L^2(D)))$ given by:
\begin{equation}
\label{vxieta}
v^\xi(t) = \eta e^{-t} +\xi(1-e^{-t}) + \int_0^t e^{-(t-s)}\sqrt{Q} dW. 
\end{equation}
When needed to specify the dependence with respect to the initial condition the solution will be denoted by $v^{\xi, \eta}(t)$. The following estimate can be 
derived for $v^{\xi, \eta}(t)$. 
\begin{lemma}
\begin{equation}\label{l}
\E \| v^{\xi, \eta}(t) \|^2_{L^2(D)} \leq 2\left(\|\eta\|^2_{L^2(D)} e^{-2t} + \|\xi\|^2_{L^2(D)} + TrQ \right). 
\end{equation}
\end{lemma}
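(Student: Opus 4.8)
The plan is to read the bound straight off the explicit mild solution \eqref{vxieta}. Write $v^{\xi,\eta}(t) = D(t) + S(t)$, where $D(t) = \eta e^{-t} + \xi(1-e^{-t})$ is deterministic and
\[
S(t) = \int_0^t e^{-(t-s)}\sqrt{Q}\,dW(s)
\]
is the stochastic convolution. Since $S(t)$ is an It\^o integral it has zero mean, so after expanding $\|D(t)+S(t)\|^2_{L^2(D)}$ and taking expectation the cross term $2\,\E\langle D(t), S(t)\rangle$ drops out, leaving
\[
\E\|v^{\xi,\eta}(t)\|^2_{L^2(D)} = \|D(t)\|^2_{L^2(D)} + \E\|S(t)\|^2_{L^2(D)}.
\]
It then remains to bound the two terms separately.

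For the deterministic term I would simply apply $\|a+b\|^2\le 2\|a\|^2+2\|b\|^2$ together with $e^{-t}\le 1$ and $0\le 1-e^{-t}\le 1$, giving $\|D(t)\|^2_{L^2(D)}\le 2\|\eta\|^2_{L^2(D)}e^{-2t} + 2\|\xi\|^2_{L^2(D)}$. For the stochastic term the key tool is the It\^o isometry for Hilbert-space-valued stochastic integrals. Because $Q$ is trace class, $\sqrt{Q}$ is Hilbert--Schmidt with $\|\sqrt{Q}\|^2_{HS} = TrQ$, so the integrand $e^{-(t-s)}\sqrt{Q}$ is admissible and the isometry yields
\[
\E\|S(t)\|^2_{L^2(D)} = \int_0^t e^{-2(t-s)}\,\|\sqrt{Q}\|^2_{HS}\,ds = TrQ\cdot\frac{1-e^{-2t}}{2} \le 2\,TrQ.
\]
Summing the two estimates produces the asserted bound.

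I do not expect a real obstacle here; the statement is essentially a direct computation. The only points deserving a line of justification are the vanishing of the cross term (a consequence of the martingale property of the stochastic integral) and the identity $\|\sqrt{Q}\|^2_{HS} = TrQ$, which follows from $\sqrt{Q}(\sqrt{Q})^* = Q$ and the trace-class hypothesis on $Q$. The constant $2$ on the right-hand side is not sharp --- the noise term alone only contributes $TrQ/2$ --- but the stated form suffices for the uses made of this lemma later.
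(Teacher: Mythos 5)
Your proof is correct, but it takes a different route from the paper. The paper's proof is a one-liner: apply the It\^o formula to $\|v^{\xi,\eta}(t)\|^2_{L^2(D)}$, which (after taking expectation, using Young's inequality on the cross term $2\langle v,\xi\rangle$, and a comparison argument for the resulting differential inequality) produces the stated bound; the trace term $TrQ$ enters there through the It\^o correction. You instead read the estimate off the mild-solution formula \eqref{vxieta}, splitting into the deterministic part and the stochastic convolution, killing the cross term by the zero-mean property of the It\^o integral, and invoking the It\^o isometry $\E\|S(t)\|^2_{L^2(D)} = \int_0^t e^{-2(t-s)}\,TrQ\,ds$. Your computation is sound: the deterministic part gives $2\|\eta\|^2 e^{-2t} + 2\|\xi\|^2$, the noise part gives at most $TrQ/2$, and the sum is dominated by the right-hand side of \eqref{l}; the vanishing of the cross term is legitimate here because $\xi$ and $\eta$ are fixed (non-random) elements of $L^2(D)$ in this lemma. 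What each approach buys: yours exploits the linearity of \eqref{v} through the explicit variation-of-constants formula already displayed in the paper, avoids any differential inequality or Gr\"onwall-type step, and in fact yields a slightly sharper constant; the paper's It\^o-formula argument is less tied to having a closed-form solution and is the template that generalizes to the conditional estimate \eqref{l1} with random $\xi,\eta$ stated in the subsequent remark, which is how the lemma is actually used later.
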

\begin{proof} It is enough to use the It\^o formula for $\| v^{\xi, \eta}(t) \|^2_{L^2(D)}$. 
\end{proof}
\subsection{The asymptotic behavior of the fast motion equation}
\label{subs21}
Let us define the transition semigroup $P_t^\xi$ associated to the equation \eqref{v}
\begin{equation}
P_t^\xi \Phi (\eta) = \E \Phi(v^{\xi, \eta}(t)), 
\end{equation}
for every $\Phi \in B_b(L^2(D))$ and every $\eta \in L^2(D)$. 
It is easy to verify that $P_t^\xi$ is a Feller semigroup because $\mathbb{P}$ a. s.
\begin{equation}\label{feller}
\|v^{\xi, \eta_1} - v^{\xi, \eta_2}\|^2_{L^2(D)} \leq e^{-2t} \|\eta_1-\eta_2 \|^2_{L^2(D)}. 
\end{equation}
We also denote by $\mu^\xi$ the associated invariant measure on $L^2(D)$. We recall that it is invariant for the semigroup $P_t^\xi$ if
$$\int_{L^2(D)} P_t^\xi \Phi (z) d\mu^\xi(z) = \int_{L^2(D)} \Phi (z) d\mu^\xi(z), $$
for every $\Phi \in B_b(L^2(D))$. 
It is obvious that $v^\xi$ is a stationary gaussian process. The equation \eqref{v} admits a unique ergodic invariant measure $\mu^\xi$ that is strongly mixing and gaussian with mean $\xi$ and covariance operator $Q$. All these results can be found in \cite{DPZ2} or \cite{Cerrai}. 

As a consequence of \eqref{feller} we also have:
\begin{equation}\label{inv}
\left |P_t^\xi \Phi (\eta) - \int_{L^2(D)} \Phi (z) d\mu^\xi(z)\right |\leq c[\Phi] e^{-t}(1+\|\eta\|_{L^2(D)} +\|\xi \|_{L^2(D)}), 
\end{equation}
for any Lipschitz function $\Phi$ defined on $L^2(D)$, where $[\Phi]$ is the Lipschitz constant of $\Phi$. This can be shown as it follows:
\begin{equation}
\begin{split}
P_t^\xi \Phi (\eta) - \int_{L^2(D)} \Phi (z) d\mu^\xi(z) &= \int_{L^2(D)} \left(P_t^\xi \Phi (\eta) -P_t^\xi \Phi (z)\right) d\mu^\xi(z)\\
&= \int_{L^2(D)} \left(\E \Phi (v^{\xi, \eta}(t)) - \E \Phi (v^{\xi, z}(t))\right) d\mu^\xi(z) \\
& \leq \int_{L^2(D)} [\Phi] \E \left\|v^{\xi, \eta}(t) - v^{\xi, z}(t)\right\|_{L^2(D)} d\mu^\xi(z)\\
&\leq \int_{L^2(D)} [\Phi] e^{-t}\E \left\|\eta-z\right\|_{L^2(D)} d\mu^\xi(z)\\
&\leq [\Phi] e^{-t} \left(\|\eta\|_{L^2(D)} + \int_{L^2(D)} \left\|z\right\|_{L^2(D)} d\mu^\xi(z)\right). 
\end{split}
\end{equation}
Now \eqref{inv} follows as a result of the following lemma:
\begin{lemma}
\label{mod}
\begin{equation}
 \int_{L^2(D)} \left\| z \right\|_{L^2(D)} d\mu^\xi(z)\leq c\left(1+\left\|\xi\right\|_{L^2(D)}\right). 
 \end{equation}
 \begin{proof}
\begin{equation}
\begin{split}
\int_{L^2(D)} \|z\|_{L^2(D)}d\mu^\xi(z) &= \int_{L^2(D)} P_t^\xi \|z\|_{L^2(D)}d\mu^\xi(z)\\
& = \int_{L^2(D)} \E \|v^{\xi, z}(t)\|_{L^2(D)}d\mu^\xi(z)\\
&\leq \int_{L^2(D)} c(1 + \|\xi\|_{L^2(D)}+ e^{-t}\|z\|_{L^2(D)})d\mu^\xi(z). 
\end{split}
\end{equation}
We fix now $t>0$ and get the result. 
\end{proof}
\end{lemma}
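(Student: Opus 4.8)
The statement to prove is
$$\int_{L^2(D)} \|z\|_{L^2(D)}\, d\mu^\xi(z) \leq c\left(1+\|\xi\|_{L^2(D)}\right),$$
and the argument already begun in the excerpt is the right one. The plan is to exploit the invariance of $\mu^\xi$ together with the explicit second-moment bound from Lemma~5.2 (equation~\eqref{l}), and then let the time parameter run to infinity to absorb the only term that still involves $\|z\|_{L^2(D)}$.

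\textbf{Step 1: invoke invariance.} Apply the function $\Phi(z)=\|z\|_{L^2(D)}$, which is Lipschitz on $L^2(D)$ with Lipschitz constant $1$, hence in particular lies in the class for which the invariance identity
$$\int_{L^2(D)} P_t^\xi \Phi(z)\, d\mu^\xi(z) = \int_{L^2(D)} \Phi(z)\, d\mu^\xi(z)$$
holds. (Strictly this identity is stated for bounded $\Phi$, so the clean way is first to apply it to the truncations $\Phi_N(z)=\|z\|_{L^2(D)}\wedge N$ and pass to the limit by monotone convergence, using the finiteness of the left-hand bound obtained below.) This rewrites the target integral as $\int_{L^2(D)} \E\|v^{\xi,z}(t)\|_{L^2(D)}\, d\mu^\xi(z)$, exactly as displayed in the excerpt.

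\textbf{Step 2: bound the transition term uniformly in $t$.} Using the representation~\eqref{vxieta} and the estimate~\eqref{l} (together with Jensen's inequality $\E\|\cdot\|\le (\E\|\cdot\|^2)^{1/2}$ and the elementary inequality $\sqrt{a+b+c}\le\sqrt a+\sqrt b+\sqrt c$), one gets
$$\E\|v^{\xi,z}(t)\|_{L^2(D)} \leq c\left(1+\|\xi\|_{L^2(D)} + e^{-t}\|z\|_{L^2(D)}\right),$$
which is the last inequality in the excerpt. Integrating against $\mu^\xi$ yields
$$\int_{L^2(D)} \|z\|_{L^2(D)}\, d\mu^\xi(z) \leq c\left(1+\|\xi\|_{L^2(D)}\right) + c\,e^{-t}\int_{L^2(D)} \|z\|_{L^2(D)}\, d\mu^\xi(z).$$

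\textbf{Step 3: absorb and conclude.} Since $\mu^\xi$ is Gaussian with mean $\xi$ and covariance $Q$ of trace class, its first moment $m:=\int_{L^2(D)}\|z\|_{L^2(D)}\,d\mu^\xi(z)$ is finite; this is what justifies moving the last term to the left. Choosing $t$ large enough that $c\,e^{-t}\le \tfrac12$ gives $m \le 2c(1+\|\xi\|_{L^2(D)})$, which is the claim. The main subtlety — and the only point deserving care — is the finiteness of $m$ and the legitimacy of applying the invariance identity to the unbounded Lipschitz function $\|\cdot\|_{L^2(D)}$; both are handled cleanly by the truncation argument in Step~1, after which the self-improving inequality in Step~3 does the rest. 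The rest is routine.
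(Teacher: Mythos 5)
Your proposal is correct and follows essentially the same route as the paper's own proof: rewrite the integral via the invariance of $\mu^\xi$ under $P_t^\xi$, bound $\E\|v^{\xi,z}(t)\|_{L^2(D)}$ by $c\left(1+\|\xi\|_{L^2(D)}+e^{-t}\|z\|_{L^2(D)}\right)$ using \eqref{l} and Jensen, then fix $t$ large and absorb the $e^{-t}$-term. The only difference is that you make explicit two points the paper leaves implicit --- the truncation needed to apply the invariance identity (stated for bounded $\Phi$) to the unbounded function $\|\cdot\|_{L^2(D)}$, and the finiteness of the first moment of the Gaussian measure $\mu^\xi$ that legitimizes the absorption --- which is a welcome tightening, not a different argument.
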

\begin{remark}
For $\xi, \eta \in L^2(\Omega, \mathcal{F}_{t_0}, L^2(D))$, let $v^{\xi, \eta}$ be the solution of the following system, the equivalent of the system \eqref{v} but with random initial conditions $\eta$ and random parameter $\xi$:
\begin{equation}
\label{vr}
\left\{
\begin{array}{ll}
dv^{\xi, \eta} &= - (v^{\xi, \eta}-\xi)dt + \sqrt{Q} dW, \\
v^{\xi, \eta}(t_0) &= \eta. 
\end{array}
\right. 
\end{equation}
The mild solution for \eqref{vr} $v^{\xi, \eta}(t)\in L^2(\Omega; C([t_0, T];L^2(D)))$ exists and is given by:
\begin{equation}
\label{vxietar}
v^{\xi, \eta}(t) = \eta e^{-(t-t_0)} +\xi(1-e^{-(t-t_0)}) + \int_0^{(t-t_0)} e^{-(t-t_0-s)}\sqrt{Q} dW. 
\end{equation}

The estimates provided by \eqref{l} and \eqref{inv} remains\ valid also in the case when $\xi$ and $\eta$ are random. So for any $\xi, \eta \in L^2(\Omega, \mathcal{F}_{t_0}, L^2(D))$, and a.e. $\omega\in \Omega$ we have:

\begin{equation}\label{l1}
\E \left( \|v^{\xi, \eta}(t)\| ^2_{L^2(D)}|\mathcal{F}_{t_0} \right)\leq 2\left(\|\eta\|^2_{L^2(D)} e^{-2(t-t_0)} + \|\xi\|^2_{L^2(D)}+ Tr Q\right), 
\end{equation}
and

\begin{equation}\label{invr}
\E\left(\left |P_t^{\xi(\omega)} \Phi (\eta(\omega)) - \int_{L^2(D)} \Phi (z) d\mu^{\xi(\omega)}(z)\right |\Big | \mathcal{F}_{t_0}\right) \leq c[\Phi] e^{-(t-t_0)}(1+\|\eta(\omega)\|_{L^2(D)} +\|\xi (\omega)\|_{L^2(D)}), 
\end{equation}
a. e. $\omega \in \Omega$, for any Lipschitz function $\Phi$ defined on $L^2(D)$. 
\end{remark}
The equation \eqref{invr} implies the following Lemma:
\begin{lemma}
\label{key}
Let $\Phi \in C^u([0, T]; L^\infty(\Omega;Lip(L^2(D))))$ be an $\mathcal{F}_t$ - measurable process on $Lip(L^2(D))$, and let $0\leq t_0 <t_0+\delta \leq T$. For $\xi, \eta \in L^2(\Omega, \mathcal{F}_{t_0}, L^2(D))$, let $v^{\xi, \eta}$ be the solution of the system \eqref{vr}. We have:
\begin{equation}\label{estkey}
\begin{split}
&\E \left(\left| \frac{1}{\delta} \int_{t_0}^{t_0+\delta} \Phi(s, v^{\xi, \eta}(s)) ds- \int_{L^2(D)}\Phi (s, z) d\mu^{\xi}(z)\right| \Big | \mathcal{F}_{t_0}\right)\leq\\
 & c\left(1+\|\eta\|_{L^2(D)}+\|\xi\|_{L^2(D)}\right) \left(\frac{\|\Phi\|}{\sqrt{\delta}} +\sqrt{\|\Phi\|[\Phi] (\delta)} \right), 
\end{split}
\end{equation}
where $[\Phi]$ is the modulus of uniform continuity of $\Phi$. 
\end{lemma}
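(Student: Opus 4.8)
The plan is to prove \eqref{estkey} by a decorrelation (Khasminskii-type) argument, exploiting the exponential mixing estimate \eqref{inv} together with the Markov property of $v^{\xi,\eta}$. The key structural simplification is that $\xi$ enters \eqref{vr} only as a frozen (random, $\mathcal{F}_{t_0}$-measurable) parameter, so $v^{\xi,\eta}$ is a time-homogeneous Markov process; the only genuine time dependence in the integrand sits in $\Phi(s,\cdot)$, and I will remove it by freezing $\Phi$ at time $t_0$. (I read the second term of \eqref{estkey} as the time average $\frac1\delta\int_{t_0}^{t_0+\delta}\int_{L^2(D)}\Phi(s,z)\,d\mu^\xi(z)\,ds$, which is the only reading making the two terms comparable.)

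First I would set $\Psi:=\Phi(t_0,\cdot)\in L^\infty(\Omega;Lip(L^2(D)))$, which is $\mathcal{F}_{t_0}$-measurable, write $\bar\Psi:=\int_{L^2(D)}\Psi\,d\mu^\xi$ and $v(s):=v^{\xi,\eta}(s)$, and decompose the integrand as
\begin{equation*}
\Phi(s,v(s)) - \int\Phi(s,z)d\mu^\xi(z) = \big[\Phi(s,v(s))-\Psi(v(s))\big] + \big[\Psi(v(s))-\bar\Psi\big] + \Big[\bar\Psi - \int\Phi(s,z)d\mu^\xi(z)\Big].
\end{equation*}
The first and third brackets are controlled pathwise by the modulus of uniform continuity: since $|\Phi(s,z)-\Psi(z)|\le[\Phi](\delta)(1+\|z\|_{L^2(D)})$ for $|s-t_0|\le\delta$ (by \eqref{lip} applied to the Lipschitz map $z\mapsto\Phi(s,z)-\Phi(t_0,z)$), taking conditional expectation and invoking \eqref{l1} for the first bracket and Lemma \ref{mod} for the third yields a contribution $\le c[\Phi](\delta)(1+\|\eta\|+\|\xi\|)$. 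Using $[\Phi](\delta)\le c\|\Phi\|$ one has $[\Phi](\delta)\le c\sqrt{\|\Phi\|[\Phi](\delta)}$, which matches the second term on the right of \eqref{estkey}.

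The heart of the argument is the middle, time-frozen term $G:=\int_{t_0}^{t_0+\delta}(\Psi(v(s))-\bar\Psi)\,ds$. By conditional Cauchy–Schwarz it suffices to estimate $\E[G^2|\mathcal{F}_{t_0}]$. Expanding the square, using symmetry to reduce to $r\le s$, and conditioning on $\mathcal{F}_r$, the Markov property gives $\E[\Psi(v(s))|\mathcal{F}_r]=(P_{s-r}^\xi\Psi)(v(r))$ — here it is essential that $\Psi$ and $\xi$ are $\mathcal{F}_{t_0}\subseteq\mathcal{F}_r$-measurable, so they may be frozen under the conditioning. The pathwise form of \eqref{inv} then bounds $|(P_{s-r}^\xi\Psi)(v(r))-\bar\Psi|\le c[\Psi]e^{-(s-r)}(1+\|v(r)\|+\|\xi\|)$, while \eqref{lip} and Lemma \ref{mod} give $|\Psi(v(r))-\bar\Psi|\le c\|\Psi\|(1+\|v(r)\|+\|\xi\|)$. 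Combining these, taking conditional expectation, and using \eqref{l1} to absorb $\E[(1+\|v(r)\|+\|\xi\|)^2|\mathcal{F}_{t_0}]\le c(1+\|\eta\|^2+\|\xi\|^2)$, I obtain
\begin{equation*}
\E[G^2|\mathcal{F}_{t_0}] \le c\,\|\Psi\|[\Psi]\,(1+\|\eta\|^2+\|\xi\|^2)\int_{t_0}^{t_0+\delta}\!\!\int_{t_0}^{s}e^{-(s-r)}\,dr\,ds \le c\,\|\Phi\|^2\,\delta\,(1+\|\eta\|^2+\|\xi\|^2),
\end{equation*}
since $\int_{t_0}^s e^{-(s-r)}dr\le1$ and $\|\Psi\|,[\Psi]\le\|\Phi\|$. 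Taking the square root and dividing by $\delta$ produces exactly $c\,\frac{\|\Phi\|}{\sqrt\delta}(1+\|\eta\|+\|\xi\|)$, the first term on the right of \eqref{estkey}.

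The main obstacle is this middle step, and the crucial quantitative point is that integrating the decorrelation factor $e^{-(s-r)}$ over the double time integral produces a single factor of $\delta$ rather than $\delta^2$, which is what turns $\E[G^2|\mathcal{F}_{t_0}]\sim\delta$ into the $1/\sqrt\delta$ decay after the square root. Care is needed to justify the Markov identity with the random but $\mathcal{F}_{t_0}$-measurable parameter $\xi$, to keep the growth factors $(1+\|v(r)\|+\|\xi\|)$ at the first power after the square root via \eqref{l1}, and to verify that freezing $\Phi$ at $t_0$ preserves the adaptedness of $\Psi$ required for the conditioning on $\mathcal{F}_r$.
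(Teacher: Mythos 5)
Your proposal is correct and follows essentially the same Khasminskii-type argument as the paper: a conditional second-moment estimate combined with the Markov property (with the $\mathcal{F}_{t_0}$-measurable data $\xi$ and the frozen Lipschitz function kept under the conditioning) and the mixing estimate \eqref{invr}, where integrating the decorrelation factor $e^{-(s-r)}$ over the double time integral produces the single factor of $\delta$ and hence the $\|\Phi\|/\sqrt{\delta}$ term. The only difference is organizational: the paper freezes the time variable at $s$ inside the expanded square (which is how the $\sqrt{\|\Phi\|[\Phi](\delta)}$ term arises there), whereas you freeze $\Phi$ at $t_0$ before squaring and control the two resulting error brackets in conditional $L^1$ via \eqref{l1} and Lemma \ref{mod}, which gives the marginally sharper $[\Phi](\delta)$ and then recovers the stated bound through $[\Phi](\delta)\le c\sqrt{\|\Phi\|[\Phi](\delta)}$.
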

\begin{proof}
We first notice that $\Phi:[t_0, t_0+\delta] \times\Omega \times L^2(D)$ is a Carath\'{e}odory function, so the left hand side is a $\mathcal{F}_{t_0}$ -measurable function on $\Omega$. We can also consider that $\Phi (s,\omega,0)=0$ for all $s\in[t_0,t_0+\delta]$ and a.e. $\omega\in\Omega$ so we have:
\begin{equation}\label{phi1}
\Phi (t,\omega,\eta_1)-\Phi (t,\omega,\eta_2) \leq \|\Phi\| \|\eta_1-\eta_2\|_{L^2(D)},
\end{equation}

\begin{equation}\label{phi2}
\Phi (t_1,\omega,\eta)-\Phi (t_2,\omega,\eta) \leq [\Phi](|t_1-t_2|)\|\eta\|_{L^2(D)}.
\end{equation}

\begin{equation}\label{s}
\begin{split}
&\E \left(\left| \int_{t_0}^{t_0+\delta} \Phi(s, v^{\xi, \eta}(s)) ds- \int_{L^2(D)}\Phi (s, \cdot) d\mu^{\xi}\right|^2   \Big | \mathcal{F}_{t_0}\right) = \\
&\E \left(\left(\int_{t_0}^{t_0+\delta} \Phi(s, v^{\xi, \eta}(s)) ds- \int_{L^2(D)}\Phi (s, \cdot) d\mu^{\xi}\right)^2 \Big | \mathcal{F}_{t_0}\right) =\\
&\E\left( \int_{t_0}^{t_0+\delta} \left( \Phi(s, v^{\xi, \eta}(s))- \int_{L^2(D)}\Phi (s, \cdot) d\mu^{\xi}\right) ds \int_{t_0}^{t_0+\delta} \left( \Phi(r, v^{\xi, \eta}(r))- \int_{L^2(D)}\Phi (r, \cdot) d\mu^{\xi}\right) dr \Big | \mathcal{F}_{t_0}\right) =\\ 
&2\E\left(\int_{t_0}^{t_0+\delta} \left( \Phi(s, v^{\xi, \eta}(s))- \int_{L^2(D)}\Phi (s, \cdot) d\mu^{\xi}\right) \int_s^{t_0+\delta} \left( \Phi(r, v^{\xi, \eta}(r))- \int_{L^2(D)}\Phi (r, \cdot) d\mu^{\xi}\right) dr ds\Big | \mathcal{F}_{t_0}\right) . 
\end{split}
\end{equation}
But, for a. e. $\omega\in\Omega$ and all $s\in [0, T]$:
\begin{equation}\nonumber
\begin{split}
\left| \Phi(s, v^{\xi, \eta}(s))- \int_{L^2(D)}\Phi (s, z) d\mu^{\xi}(z)\right| &\leq c \|\Phi\| \int_{L^2(D)}\left\|v^{\xi, \eta}(s)- z\right\|_{L^2(D)}d\mu^{\xi}(z)\\
&\leq c \|\Phi\| \int_{L^2(D)}\left(\left\|v^{\xi, \eta}(s)\right\|_{L^2(D)}+ \left\|z\right\|_{L^2(D)}\right)d\mu^{\xi}(z), 
\end{split}
\end{equation}
so after using \eqref{l1}
\begin{equation}\label{s'}
\E \left(\left( \Phi(s, v^{\xi, \eta}(s))- \int_{L^2(D)}\Phi (s, \cdot) d\mu^{\xi} \right)^2|\mathcal{F}_{t_0}\right)\leq c\|\Phi\|^2 (1+\|\xi\|^2 +\|\eta\|^2).
\end{equation}
for every $s\in [t_0, t_0+\delta]$ a. e. $\omega\in\Omega$. 
Now, using \eqref{phi2}:
\begin{equation}\nonumber
\begin{split}
 \Phi(r, v^{\xi, \eta}(r))&- \int_{L^2(D)}\Phi (r, \cdot) d\mu^{\xi}= \Phi(s, v^{\xi, \eta}(r))- \int_{L^2(D)}\Phi (s, \cdot) d\mu^{\xi}\\
&+ \Phi(r, v^{\xi, \eta}(r))- \Phi(s, v^{\xi, \eta}(r))+\int_{L^2(D)}\Phi (s, \cdot) d\mu^{\xi}-\int_{L^2(D)}\Phi (r, \cdot) d\mu^{\xi}\\
&\leq \Phi(s, v^{\xi, \eta}(r))- \int_{L^2(D)}\Phi (s, \cdot) d\mu^{\xi} + [\Phi](r-s)\left(\|v^{\xi, \eta}(r)\|_{L^2(D)}+\int_{L^2(D)}\|z\|_{L^2(D)} d\mu^{\xi}(z)\right), 
\end{split}
\end{equation}
a. e. $\omega\in\Omega$ to get after using \eqref{s} and \eqref{s'} :
\begin{equation}\label{eq6'}
\begin{split}
&\E \left( \left| \int_{t_0}^{t_0+\delta} \Phi(s, v^{\xi, \eta}(s)) ds- \int_{L^2(D)}\Phi (s, \cdot) d\mu^{\xi}\right|^2 \Big | \mathcal{F}_{t_0}\right)= \\
&2\int_{t_0}^{t_0+\delta} \E\left(\left( \Phi(s, v^{\xi, \eta}(s))- \int_{L^2(D)}\Phi (s, \cdot) d\mu^{\xi}\right) \int_s^{t_0+\delta} \left( \Phi(r, v^{\xi, \eta}(r))- \int_{L^2(D)}\Phi (r, \cdot) d\mu^{\xi}\right) dr \Big | \mathcal{F}_{t_0}\right)ds\leq\\
&2\int_{t_0}^{t_0+\delta} \E\left(\left( \Phi(s, v^{\xi, \eta}(s))- \int_{L^2(D)}\Phi (s, \cdot) d\mu^{\xi}\right) \int_s^{t_0+\delta} \left( \Phi(s, v^{\xi, \eta}(r))- \int_{L^2(D)}\Phi (s, \cdot) d\mu^{\xi}\right) dr\Big | \mathcal{F}_{t_0}\right)ds +\\
&c \|\Phi\| [\Phi](\delta) \left(1+\|\xi\|+\|\eta\|\right)\delta^{3/2} \left(\int_s^{t_0+\delta} \E \left(\left(1+\|\xi\|_{L^2(D)} +\|v^{\xi, \eta}(r)\|_{L^2(D)}\right)^2 \Big | \mathcal{F}_{t_0}\right)dr\right)^{1/2}\leq\\
&2\int_{t_0}^{t_0+\delta} \E\left(\left( \Phi(s, v^{\xi, \eta}(s))- \int_{L^2(D)}\Phi (s, \cdot) d\mu^{\xi}\right) \int_s^{t_0+\delta} \left( \Phi(s, v^{\xi, \eta}(r))- \int_{L^2(D)}\Phi (s, \cdot) d\mu^{\xi}\right) drds \Big | \mathcal{F}_{t_0}\right) +\\
& c\delta^2 \|\Phi\| [\Phi](\delta)(1+\|\xi\|^2_{L^2(D)} +\|\eta\|^2_{L^2(D)}). 
\end{split}
\end{equation}
But:
\begin{equation}\nonumber\begin{split}
&\E\left(\left( \Phi(s, v^{\xi, \eta}(s))- \int_{L^2(D)}\Phi (s, \cdot) d\mu^{\xi}\right) \int_s^{t_0+\delta} dr \left(\Phi(s, v^{\xi, \eta}(r))- \int_{L^2(D)}\Phi (s, \cdot) d\mu^{\xi}\right)  \Big | \mathcal{F}_{t_0}\right)=\\
&\E\left(\left( \Phi(s, v^{\xi, \eta}(s))- \int_{L^2(D)}\Phi (s, \cdot) d\mu^{\xi}\right) \int_s^{t_0+\delta} dr \E\left( \left(\Phi(s, v^{\xi, \eta}(r))- \int_{L^2(D)}\Phi (s, \cdot) d\mu^{\xi}\right)|\mathcal{F}_s  \right)  \Big | \mathcal{F}_{t_0}\right)=\\
&\E\left(\left( \Phi(s, v^{\xi, \eta}(s))- \int_{L^2(D)}\Phi (s, \cdot) d\mu^{\xi}\right) \int_s^{t_0+\delta} dr \E\left( \left(P_{r-s}^{\xi}\Phi(s, v^{\xi, \eta}(s))- \int_{L^2(D)}\Phi (s, \cdot) d\mu^{\xi}\right)|\mathcal{F}_s  \right)  \Big | \mathcal{F}_{t_0}\right)\leq\\
&c \|\Phi\|(1+\|\xi\|_{L^2(D)}+\|\eta\|_{L^2(D)})\left(\E\left(\int_s^{t_0+\delta}dr \E\left( \left(P_{r-s}^{\xi}\Phi(s, v^{\xi, \eta}(s))- \int_{L^2(D)}\Phi (s, \cdot) d\mu^{\xi}\right)|\mathcal{F}_s  \right)  \right)^2\Big | \mathcal{F}_{t_0}\right)^{1/2}, 
\end{split}\end{equation}
and using \eqref{invr} we have that a. e. $\omega\in\Omega$ and all $s\in[t_0, t_0+\delta]$:
\begin{equation}\nonumber
\begin{split}
\E\left(\left|P_{r-s}^{\xi} \Phi(s, v^{\xi, \eta}(s))- \int_{L^2(D)}\Phi (s, z) d\mu^{\xi}(z)\right| \Big | \mathcal{F}_{s}\right)&\leq c e^{-(r-s) }\|\Phi\| \left( 1+\|\xi\|_{L^2(D)}+\|v^{\xi, \eta}(s)\|_{L^2(D)} \right), 
\end{split}
\end{equation} 
so we get that
\begin{equation}\nonumber
\begin{split}
&\E\left(\left( \Phi(s, v^{\xi, \eta}(s))- \int_{L^2(D)}\Phi (s, z) d\mu^{\xi}(z)\right) \int_s^{t_0+\delta} dr \left(\Phi(s, v^{\xi, \eta}(r))- \int_{L^2(D)}\Phi (s, z) d\mu^{\xi}(z)\right)  \Big | \mathcal{F}_{t_0}\right)\leq\\
& c\|\Phi\|(1+\|\xi\|_{L^2(D)}+\|\eta\|_{L^2(D)})  \left(\int_s^{t_0+\delta} dr e^{-(r-s) }\right) \|\Phi\| \E\left(\left( 1+\|\xi\|_{L^2(D)}+\|v^{\xi, \eta}(s)\|_{L^2(D)} \right)^2\Big | \mathcal{F}_{t_0}\right)^{1/2}\leq\\
& c\|\Phi\|^2(1+\|\xi\|_{L^2(D)}+\|\eta\|_{L^2(D)})^2\left( 1-e^{-(t_0+\delta-s)}\right). 
\end{split}
\end{equation}
The equation \eqref{eq6'} becomes now
\begin{equation}\label{eq6''}
\begin{split}
&\left(\E \left( \left| \frac{1}{\delta} \int_{t_0}^{t_0+\delta} \Phi(s, v^{\xi, \eta}(s)) ds- \int_{L^2(D)}\Phi (s, z) d\mu^{\xi}(z)\right| \Big | \mathcal{F}_{t_0} \right)\right)^2 \leq \\
& c \|\Phi\| [\Phi](\delta)(1+\|\xi\|_{L^2(D)}+\|\eta\|_{L^2(D)})^2+c\frac{1}{\delta^2}\int_{t_0}^{t_0+\delta} \|\Phi\|^2(1+\|\xi\|_{L^2(D)}+\|\eta\|_{L^2(D)})^2 ds\leq\\
& c \|\Phi\| [\Phi](\delta)(1+\|\xi\|_{L^2(D)}+\|\eta\|_{L^2(D)})^2+c\frac{1}{\delta} \|\Phi\|^2(1+\|\xi\|_{L^2(D)}+\|\eta\|_{L^2(D)})^2, 
\end{split}
\end{equation}
which proves the Lemma. 
\end{proof}

\section{Passage to the limit}
\label{sec6}
The main goal of this section is to pass to the limit in the system \eqref{system1} when $\e\to 0$. 
We introduce the following averaged operators:
\begin{equation}
\label{defoalphae}
\o{\alpha^\e}: L^2(D) \to L^\infty(D), \ \ \o{\alpha^\e}(\xi) = \int_{L^2(D)}\alpha^\e(\eta) d\mu^\xi (\eta)
\end{equation}

\begin{equation}
\label{defoalpha}
\o{\alpha}: L^2(D) \to L^\infty(D), \ \ \o{\alpha} (\xi) = \int_{L^2(D)} \left(\int_Y \alpha(y, z)dy\right) d\mu^\xi(z). 
\end{equation}

We remark that $\alpha^\e$ as an operator from $L^2(D)$ to $L^2(D)$ is Lipschitz and $L^2(D)$ is separable, so Pettis Theorem implies that $\alpha^\e: L^2(D) \to L^2(D)$ is measurable. The boundedness of $\alpha^\e$ implies the integrability with respect to the probability measure $\mu^\xi$, so $\o{\alpha^\e}$ is well defined (see Chapter 5, Sections 4 and 5 from \cite{Yosida} for details). The same considerations hold also for the operators $z\in L^2(D) \to \o{\alpha} (z)=\displaystyle\int_Y \alpha(y, z)dy \in L^\infty(D)$, so $\o{\alpha}$ is also well defined. 
Our main result is given by the next theorem. 
\begin{theorem}
\label{thconv1}
Assume the sequence $u^\e_0$ is uniformly bounded in $H_0^1(D))$ and strongly convergent in $L^2(D)$ to some function $u_0$, and $v^\e_0$ is uniformly bounded in $L^2(D)$. Then, there exists $\o{u} \in L^2(0, T;H_0^1(D)))\cap C([0, T]; L^2(D))$ such that $u^\e$ converges in probability to $\o{u}$ in $w\mbox{-}L^2(0, T;H_0^1(D)))\cap C([0, T]; L^2(D))$ and $\o{u}$ is the solution of the following deterministic equation:
\begin{equation}
\label{eqou1}
\left\{
\begin{array}{rll}
\dfrac{\partial \o{u}}{\partial t} &= \operatorname{div} \left( \o{A} \nabla\o{u}\right) + \o{\alpha} (\o{u}) \o{u} + f &\mbox{ in }\ D, \\
\o{u} &=0 &\mbox{ on }\ \partial D, \\
\o{u}(0) & = u_0&\mbox{ in }\ D. 
\end{array}
\right. 
\end{equation}
\end{theorem}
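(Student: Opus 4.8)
The plan is to combine a tightness/Skorokhod extraction, Tartar's oscillating test function method for the diffusion, and a Khasminskii-type averaging (based on Lemma \ref{key}) for the reaction term, and then to upgrade the resulting convergence in law to convergence in probability using that the limiting equation \eqref{eqou1} is deterministic. By Theorem \ref{threg} the family $\{u^\e\}$ is bounded, uniformly in $\e$ and $\omega$, in $L^2(0,T;H^2(D))$ with $\partial_t u^\e$ bounded in $L^2(0,T;L^2(D))$; an Aubin--Lions compactness argument as in the proof of Theorem \ref{thexun} shows that the laws of $u^\e$ are tight in $C([0,T];L^2(D))$ and in $L^2(0,T;H_0^1(D))$ with its weak topology. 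Invoking Skorokhod's theorem, I pass to a new probability space on which a subsequence satisfies $u^\e\to\o{u}$ almost surely in $w\mbox{-}L^2(0,T;H_0^1(D))\cap C([0,T];L^2(D))$, and the associated $v^\e$, read off from \eqref{mildsole}, converges accordingly. It then remains to pass to the limit in the weak formulation \eqref{weaksole}.

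The time-derivative term passes to the limit by the weak convergence of $\partial_t u^\e$. For the diffusion term $\int_0^t\!\int_D A^\e\nabla u^\e\,\nabla\phi\,dx\,ds$ I would employ the correctors $\chi,\chi^*$ of \eqref{cellpr}--\eqref{cellpr*}: testing the equation against the oscillating function $\phi+\e\,\chi^*(\cdot/\e)\cdot\nabla\phi$ and using the equation satisfied by $\chi^*$ together with the weak $L^2$ convergence of $\nabla u^\e$, a standard compensated-compactness computation yields $A^\e\nabla u^\e\rightharpoonup\o{A}\,\nabla\o{u}$ with $\o{A}$ given by \eqref{homop}.

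The genuinely difficult step is the reaction term \eqref{term}. I would discretize $[0,t]$ into subintervals of length $\Delta$ with $\e\ll\Delta\ll1$, freeze $u^\e$ at the left endpoint $t_k$ of each subinterval, and, after rescaling to the fast time $s=t/\e$ so that $v^\e(\e s)$ solves \eqref{v} with frozen parameter $\xi=u^\e(t_k)$, apply Lemma \ref{key} to the Lipschitz scalar functional $\Phi(s,\eta)=\int_D\alpha^\e(\eta)\,u^\e(\e s)\,\phi\,dx$ (the $F^\e$ of the introduction, tested against $\phi$) over a window $\delta\sim\Delta/\e$. The right-hand side of \eqref{estkey} is then controlled by $\sqrt{\e/\Delta}+\Delta^{1/4}$: the factor $\|\Phi\|/\sqrt{\delta}$ is small because $\delta\to\infty$, while $\sqrt{\|\Phi\|\,[\Phi](\delta)}$ is small because the time modulus of $\Phi$ is governed by the H\"older-$1/2$ continuity in time of $u^\e$ in $L^2(D)$ (a consequence of the bound \eqref{est3'}). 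This replaces the time average of $\alpha^\e(v^\e)u^\e$ by $\o{\alpha^\e}(u^\e)\,u^\e$, with $\o{\alpha^\e}$ from \eqref{defoalphae}. A second, purely periodic, passage to the limit---using the strong $C([0,T];L^2(D))$ convergence of $u^\e$ and that $g(\cdot/\e)\rightharpoonup\int_Y g$ for $Y$-periodic $g$ (the result quoted from \cite{A-2s})---then sends $\int_D\o{\alpha^\e}(u^\e)u^\e\phi\,dx$ to $\int_D\o{\alpha}(\o{u})\o{u}\,\phi\,dx$ with $\o{\alpha}$ as in \eqref{defoalpha}.

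Finally, since $\o{A}$ is uniformly elliptic (inherited from \eqref{A}) and $\o{\alpha}$ is bounded and Lipschitz, the averaged problem \eqref{eqou1} is a deterministic quasilinear parabolic equation whose well-posedness follows from energy estimates and a Gr\"onwall argument identical in spirit to the uniqueness part of Theorem \ref{thexun}; in particular its solution $\o{u}$ is independent of the extracted subsequence and is deterministic. Convergence in law of $u^\e$ to the deterministic $\o{u}$ is therefore equivalent to convergence in probability, and the whole family $u^\e$ converges to $\o{u}$ in $w\mbox{-}L^2(0,T;H_0^1(D))\cap C([0,T];L^2(D))$. I expect the principal obstacle to be the coupling inside the single reaction term of the stochastic averaging (letting the fast mixing equilibrate via Lemma \ref{key}) with the spatial homogenization (letting the $x/\e$ oscillations average over $Y$), which must be carried out simultaneously and uniformly in $\e$.
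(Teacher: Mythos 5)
Your proposal is correct and follows essentially the same route as the paper's own proof: tightness plus Skorokhod, Tartar's oscillating test functions $\phi+\e\,\chi^*(\cdot/\e)\cdot\nabla\phi$ for the diffusion, the Khasminskii discretization with Lemma \ref{key} applied to $F^\e(s,\eta)=\int_D\alpha^\e(\eta)u^\e(\e s)\phi\,dx$ for the reaction term, and uniqueness for the deterministic averaged equation to upgrade convergence in law to convergence in probability. The only cosmetic difference is that you merge the paper's two remaining steps ($S^\e_2$, handled by the Lipschitz property of $\o{\alpha^\e}$ and strong convergence, and $S^\e_3$, handled by Allaire's lemma together with Vitali's theorem) into a single ``periodic'' passage to the limit, which is the same argument packaged as one step.
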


Let us explain the main ideas involved in the proof of this convergence. The uniform bounds for $u^\e$ provided by Theorem \ref{threg} imply that the sequence is tight in $w\mbox{-}L^2(0, T;H^1_0(D))\cap C([0, T]; L^2(D))$, so there exists a limit $\o{u}$ in distribution. We apply after that Skorokhod theorem to get another sequence $\wt{u^\e}$ defined on some probability space $\wt\Omega$, with same distribution as $u^\e$ that converges for a. e. 
$\wt\omega\in \wt\Omega$ to some $\wt{\o{u}}$ in $w\mbox{-}L^2(0, T; H_0^1(D)))\cap C([0, T]; L^2(D))$. We show that $\wt{\o{u}}$ is deterministic and get an equation for it by passing to the limit in expected value in the variational formulation. More precisely, we prove first that:
\begin{equation}
\label{dif}
\begin{split}
\lim_{\e \to 0} &\ \E \left | \int_0^T \int_D \left( \alpha^\e(v^\e(t)) u^\e(t) - \o{\alpha}(\o{u}(t)) \o{u}(t) \right)\phi^\e \psi(t) dx dt \right | = 0, 
\end{split}
\end{equation}
for a particular sequence $\phi^\e \in H^1_0(D)$ and any $\psi \in C[0, T]$. 
We rewrite it as:
$$\int_0^T \int_D \left( \alpha^\e(v^\e(t)) u^\e(t) - \o{\alpha}(\o{u}(t)) \o{u}(t) \right)\phi^\e \psi(t) dx dt = S^\e_1 + S^\e_2 +S^\e_3, $$
where
\begin{equation}\label{S1}
S^\e_1 = \int_0^T \int_D \left(\alpha^\e(v^\e(t)) - \o{\alpha^\e}(u^\e(t))\right)u^\e(t)\phi^\e \psi(t) dxdt, 
\end{equation}
\begin{equation}\nonumber
S^\e_2 = \int_0^T \int_D \left( \o{\alpha^\e}(u^\e(t))u^\e(t) - \o{\alpha^{\e}}(\o{u}(t))\o{u}(t) \right)\phi^\e \psi(t)dx dt, 
\end{equation}
and
\begin{equation}\nonumber
S^\e_3 = \int_0^T \int_D \left(\o{\alpha^{\e}}(\o{u}(t))\o{u}(t) - \o{\alpha}(\o{u}(t))\o{u}(t)\right)\phi^\e \psi(t) dxdt. 
\end{equation}
This convergence requires two steps. The first step is performed in Subsection \ref{subs43} where we prove the convergence to $0$ for $S^\e_1$. This is done by proving the more general result \eqref{S1} where the equation satisfied by $u^\e$ is not important. The idea is to approximate $u^\e$ and $\phi^\e$ by step functions in time and use Lemma \ref{key} on each piece. In Subsection \ref{subs44} we do the second step, the convergence to $0$ of $S^\e_2$. In Subsection \ref{subs45} we show the convergence to $0$ of $S^\e_3$, which is showed in Lemma \ref{lemmaconv1'}. 

The sequence $\wt{u^\e}$ given by Skorokhod theorem converges a. s. to $\wt{\o{u}}$ weakly in $L^2(0, T;H^1_0(D))$ and strongly in $C([0, T];L^2(D))$ so 
\begin{equation}\label{dif1}
\lim_{\e \to 0} \left | \int_0^T \int_D \left(\wt{u^\e} (t)-\wt{\o{u}} (t)\right) \phi^\e \psi'(t) dx dt - \int_0^T \int_D \left( A^\e \nabla \wt{u^\e} - \o{A}\nabla\wt{\o{u}} \right) \nabla \phi \psi(t) dx dt\right | = 0, \quad a. s. \end{equation}
%pointwise in $\Omega '$. 
The equations \eqref{dif} and \eqref{dif1} imply that $\wt{\o{u}}$ satisfies almost surely the variational formulation associated with \eqref{eqou1}, so $\wt{\o{u}}$ and $\o{u}$ are deterministic and as a consequence the convergence of the sequence $u^\e$ to $\o{u}$ will be in probability. 
Before proceeding with the proof of Theorem \ref{thconv1}, let us first study system \eqref{eqou1}. 

\subsection{Well-possedness for the averaged equation \eqref{eqou1}}
\label{subs41}
\begin{theorem}\label{thexunou}
Assume $f\in L^2(0, T;L^2(D))$ and $\o{\alpha} \in Lip_b(\mathbb{R})$. Then, for any $u_0 \in L^2(D)$ the system \eqref{eqou1} admits a unique solution $\o{u} \in C([0, T];L^2(D))\cap L^2(0, T;H_0^1(D)))$ with $\dfrac{\partial \o{u}}{\partial t} \in L^2(0, T;H^{-1}(D))$ in the following sense:
\begin{equation}
\label{weaksolou}
\int_D \o{u}(t) \phi dx - \int_D u_0 \phi dx + \int_0^t \int_D \o{A}\nabla \o{u} (s) \nabla \phi dx ds =\int_0^t \int_D \o{\alpha}(\o{u}) \o{u} \phi dx ds + \int_0^t \int_D f(s) \phi dx ds, 
\end{equation}
for every $t\in[0, T]$ and every $\phi \in H_0^1(D)$. Moreover, if the initial condition $u_0 \in H_0^1(D)$, then $\o{u}$ has the improved regularity, $\o{u} \in L^2(0, T; H^2(D)) \cap L^\infty(0, T;H_0^1(D))$ and $\dfrac{ \partial \o{u}}{\partial t} \in L^2(0, T; L^2(D))$. 
\begin{proof}
The proof of existence of solutions is similar to the proof of system \eqref{system1}, using a Galerkin approximation procedure. The finite dimensional approximation $\o{u}_n$, defined as in Theorem \ref{thexun} will solve
\begin{equation}
\label{weaksoloun}
\int_D\frac{\partial \o{u}_n} {\partial t} (t) \phi dx + \int_D \o{A}\nabla \o{u}_n (t) \nabla \phi dx = \int_D \o{\alpha}(\o{u}_n) \o{u}_n \phi dx + \int_D f(t) \phi dx, 
\end{equation}
for every $\phi \in C([0, T], H_0^1(D))_n)$, and $\o{u}_n(0) =\Pi_n u_0$. We take $\phi = \o{u}_n(t)$, and get:
\begin{equation*}
\begin{split}
&\int_D\frac{\partial \o{u}_n} {\partial t} (t) \o{u}_n(t) dx + \int_D m\|\nabla \o{u}_n (t)\|^2 dx \leq c\int_D |\o{u}_n(t) |^2dx + \int_D f(t) \o{u}_n(t)dx \Rightarrow \\
&\dfrac{ \partial} {\partial t} \|\o{u}_n(t) \|^2_{L^2(D)} \leq \|f(t) \|^2_{L^2(D)} +c \|\o{u}_n(t) \|^2_{L^2(D)} \Rightarrow \\
&\|\o{u}_n(t) \|^2_{L^2(D)} \leq c +c\int_0^t \|\o{u}_n(s) \|^2_{L^2(D)} ds. 
\end{split}
\end{equation*}
We use Gr\"{o}nwall's lemma and get:
\begin{equation}\label{estunou1}
\sup _{n > 0}\| \o{u}_n \|_{C([0, T]; L^2(D)} \leq C_T, 
\end{equation}
and from here we also obtain
\begin{equation}\label{estunou2}
\sup _{n > 0}\|\nabla \o{u}_n \|_{L^2(0, T;L^2(D)^{3})} \leq C_T, 
\end{equation}
and
\begin{equation}\label{estunou3}
\sup _{n > 0}\left\| \dfrac{ \partial \o{u}_n} {\partial t} \right\|_{L^2(0, T;H^{-1}(D)} \leq C_T. 
\end{equation}
So there exists a subsequence $\o{u}_{n'}$ and a function $\o{u} \in L^\infty(0, T; L^2(D)) \cap L^2(0, T;H_0^1(D))$ such that $\o{u}_{n'}$ converges weakly star in $L^\infty(0, T; L^2(D))$ and weakly to $L^2(0, T;H_0^1(D))$ to $\o{u}$ and also $\dfrac{\partial \o{u}_{n'}}{\partial t} $ converges to $\dfrac{\partial \o{u}}{\partial t} $ weakly in $L^2(0, T;H^{-1}(D))$. We apply again now Theorem 2. 1, page 271 and Lemma 1. 2 page 260 from \cite{temam} to obtain that $\o{u}_{n'}$ converges strongly in $L^2(0, T;L^2(D))$ and in $C([0, T];L^2(D))$ to $\o{u}$. We then pass to the limit and obtain that $\o{u}$ is a weak solution for \eqref{eqou1}. 

Now, to show uniqueness we assume to have two solutions $\o{u}_1$ and $\o{u}_2$ in $ C([0, T];L^2(D))\cap L^2(0, T;H_0^1(D))$ and substract the variational formulations. We get:
\begin{equation}\nonumber
\begin{split}
\int_D (\o{u}_2(t) -\o{u}_1(t)) \phi dx + \int_0^t \int_D \o{A}(\nabla \o{u}_2 -\nabla \o{u}_1) \nabla \phi dx ds =
\int_0^t \int_D (\o{\alpha}(\o{u}_2) \o{u}_2- \o{\alpha}(\o{u}_1) \o{u}_1)\phi dx ds. 
\end{split}
\end{equation}
We take $\phi = \o{u}_2 - \o{u}_1$ and write
\begin{equation*}
\begin{split}
(\o{\alpha}(\o{u}_2) \o{u}_2 - \o{\alpha}(\o{u}_1) \o{u}_1) (\o{u}_2 - \o{u}_1) &= \o{\alpha}(\o{u}_2) (\o{u}_2 - \o{u}_1)^2 + \o{u}_1 (\o{\alpha}(\o{u}_2) - \o{\alpha}(\o{u}_1))(\o{u}_2 - \o{u}_1)\\
&\leq C (\o{u}_2 - \o{u}_1)^2 + C|u_1||\o\alpha(u_2)-\o\alpha(u_1)| |u_2-u_1|. 
\end{split}
\end{equation*}
We get
\begin{equation*}
\begin{split}
 &\| \o{u}_2(t) -\o{u}_1(t) \|^2_{L^2(D)}+ \int_0^t m\| \nabla \o{u}_2 - \nabla \o{u}_1\|^2_{L^2(D)^{3}}ds\\
& \leq C \int_0^t \|\o{u}_2 - \o{u}_1\|_{L^2(D)}^2 ds+C \int_0^t \|\o{u}_1\|_{L^4(D)} \|\o{\alpha}(\o{u}_2) - \o{\alpha}(\o{u}_1)\|_{L^2(D)} \|\o{u}_2 - \o{u}_1\|_{L^4(D)} ds\\
& \leq C \int_0^t \|\o{u}_2 - \o{u}_1\|_{L^2(D)}^2 ds+C(\e)\int_0^t \|\o{u}_1 \|_{L^4(D)}^2 \| \o{\alpha}(\o{u}_2) - \o{\alpha}(\o{u}_1)\|^2_{L^2(D)} ds+\e \int_0^t \|\o{u}_2 - \o{u}_1\|^2_{L^4(D)} ds\\
&\leq C \int_0^t \|\o{u}_2 - \o{u}_1\|_{L^2(D)}^2 ds +C(\e)\int_0^t \|\nabla \o{u}_1 \|_{L^2(D)^3}^2 \| \o{u}_2 - \o{u}_1\|^2_{L^2(D)} ds+\e \int_0^t \|\nabla \o{u}_2 -\nabla \o{u}_1\|^2_{L^2(D)^3} ds. 
\end{split}
\end{equation*}
after using H\"{o}lder's inequality and Sobolev imbedding theorem. We obtain for a convenient choice of $\e$
\begin{equation*}
\begin{split}
\| \o{u}_2(t) -\o{u}_1(t) \|^2_{L^2(D)} \leq c(\e) \int_0^t \| \o{u}_2(s) -\o{u}_1(s) \|^2_{L^2(D)} \left(1+ \|\nabla \o{u}_1 \|_{L^2(D)^3}^2\right). 
\end{split}
\end{equation*}
We get uniqueness from here by applying Gr\"{o}nwall's lemma. 

Let us now assume that the initial condition $u_0 \in H_0^1(D)$. We use the equation \eqref{weaksoloun} with $\phi = \dfrac{\partial \o{u}_n}{\partial t}$:
\begin{equation}
\label{weaksoloun'}
\int_D\left(\frac{\partial \o{u}_n} {\partial t} (t)\right)^2 dx + \int_D \o{A}\nabla \o{u}_n (t) \nabla \dfrac{\partial \o{u}_n}{\partial t} (t) dx = \int_D \o{\alpha}(\o{u}_n(t)) \o{u}_n(t) \dfrac{\partial \o{u}_n}{\partial t} (t)dx+ \int_D f(t) \dfrac{\partial \o{u}_n}{\partial t} (t)dx, 
\end{equation}
we integrate it over $[0, T]$, and use H\"{o}lder's inequality:
$$ \left\| \frac{\partial \o{u}_n} {\partial t}\right\|^2_{L^2(0, T; L^2(D))} + m\| \nabla \o{u}_n (T)\|^2_{L^2(D)^{3}} - M\| \nabla \o{u}_n (0)\|^2_{L^2(D)^{3}} \leq C \left\| \frac{\partial \o{u}_n} {\partial t}\right\|_{L^2(0, T; L^2(D))}, $$
which will imply that $\dfrac{\partial \o{u}_n} {\partial t} \in L^2(0, T; L^2(D))$ uniformly bounded and $\nabla u_n \in L^\infty(0, T;L^2(D)^3)$ uniformly bounded. Regularity theorem for the stationary Stokes equation implies that $\Delta \o{u}_n \in L^2(0, T; L^2(D) )$ and is uniformly bounded and $\o{u}_n \in L^2(0, T; H^2(D))$ and is uniformly bounded. We deduce by passing to the limit that $\o{u} \in L^2(0, T; H^2(D))$, $ \dfrac{\partial \o{u}} {\partial t}\in L^2(0, T; L^2(D)) $ and $\o{u} \in C([0, T]; H_0^1(D))$. 

\end{proof}
\end{theorem}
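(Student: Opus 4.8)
The plan is to follow the same three-part scheme used for Theorem~\ref{thexun}: a Galerkin construction for existence, an energy estimate closed by Gr\"onwall's lemma for uniqueness, and a bootstrap based on testing with the time derivative for the improved regularity. For existence I would fix the basis $(e_k)$ of $H_0^1(D)$ and solve the finite-dimensional system \eqref{weaksoloun} for $\o{u}_n=\sum_{k=1}^n c_k(t)e_k$. Since $\o\alpha$ is bounded and Lipschitz, the Nemytskii map $\eta\mapsto\o\alpha(\eta)\eta$ is locally Lipschitz, so the ODE system has a unique local solution, global thanks to the a priori bounds. Testing \eqref{weaksoloun} with $\phi=\o{u}_n$, using the coercivity $\o A\xi\cdot\xi\ge m|\xi|^2$ of the constant homogenized matrix $\o A$ from \eqref{homop} (inherited from \eqref{A}) and the pointwise bound $|\o\alpha(\o u_n)\,\o u_n^2|\le\|\o\alpha\|_\infty\,\o u_n^2$, gives after Gr\"onwall the uniform estimates \eqref{estunou1}--\eqref{estunou3}. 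The compactness embedding (Theorem~2.1, p.~271 of \cite{temam}) then yields a subsequence converging strongly in $L^2(0,T;L^2(D))$ and weakly in $L^2(0,T;H_0^1(D))$, with $\partial_t\o u_{n'}\rightharpoonup\partial_t\o u$ in $L^2(0,T;H^{-1}(D))$.

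The only passage to the limit that weak convergence does not handle is the reaction term. Writing $\o\alpha(\o u_{n'})\o u_{n'}-\o\alpha(\o u)\o u=\o\alpha(\o u_{n'})(\o u_{n'}-\o u)+(\o\alpha(\o u_{n'})-\o\alpha(\o u))\o u$ and using boundedness and Lipschitz continuity of $\o\alpha$ together with the \emph{strong} $L^2(0,T;L^2(D))$ convergence, this term converges, identifying $\o u$ as a weak solution in the sense of \eqref{weaksolou}; Lemma~1.2, p.~260 of \cite{temam} then gives $\o u\in C([0,T];L^2(D))$. For uniqueness I would subtract the formulations for two solutions $\o u_1,\o u_2$, test with $\phi=\o u_2-\o u_1$, and split the nonlinearity as $\o\alpha(\o u_2)(\o u_2-\o u_1)^2+\o u_1\big(\o\alpha(\o u_2)-\o\alpha(\o u_1)\big)(\o u_2-\o u_1)$. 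The first piece is bounded by $\|\o\alpha\|_\infty\|\o u_2-\o u_1\|^2_{L^2(D)}$; for the second I would use the embedding $H_0^1(D)\hookrightarrow L^4(D)$ (valid in dimension three), the Lipschitz bound on $\o\alpha$, and Young's inequality to absorb $\|\nabla(\o u_2-\o u_1)\|_{L^2(D)^3}$ into the coercive term, leaving the factor $\|\nabla\o u_1\|^2_{L^2(D)^3}$, which is integrable in $t$ since $\o u_1\in L^2(0,T;H_0^1(D))$. Gr\"onwall's lemma then forces $\o u_1=\o u_2$.

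For the improved regularity when $u_0\in H_0^1(D)$, I would test \eqref{weaksoloun} with $\phi=\partial_t\o u_n$ and integrate over $[0,T]$. After integration, the diffusion term equals the increment of the coercive quadratic form associated with $\o A$, bounded below via $m$ and above via $M$ (the antisymmetric part of $\o A$ contributes only a boundary term, which vanishes because $\partial_t\o u_n\in H_0^1(D)$), while the reaction and forcing terms are estimated by $\|\partial_t\o u_n\|_{L^2(0,T;L^2(D))}$ using Cauchy--Schwarz together with the already established $L^\infty(0,T;L^2(D))$ bound on $\o u_n$ and $f\in L^2(0,T;L^2(D))$; one then absorbs this factor by Young's inequality. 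This gives uniform bounds on $\partial_t\o u_n$ in $L^2(0,T;L^2(D))$ and on $\nabla\o u_n$ in $L^\infty(0,T;H_0^1(D))$. Finally, reading \eqref{eqou1} pointwise in $t$ as the elliptic equation $-\operatorname{div}(\o A\nabla\o u)=f+\o\alpha(\o u)\o u-\partial_t\o u$ with right-hand side in $L^2(0,T;L^2(D))$ and invoking elliptic $H^2$-regularity yields $\o u\in L^2(0,T;H^2(D))$, and Lemma~1.2 of \cite{temam} gives $\o u\in C([0,T];H_0^1(D))$.

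I expect the main obstacle to be the passage to the limit in the reaction term: it is the only place where weak convergence is insufficient, and it is resolved precisely by the strong $L^2(0,T;L^2(D))$ convergence from the Aubin--Lions compactness combined with the boundedness and Lipschitz continuity of $\o\alpha$. A secondary difficulty is the uniqueness estimate, where the quadratic, non-globally-Lipschitz structure of $\o\alpha(\o u)\o u$ forces the use of the three-dimensional Sobolev embedding and a careful absorption, so that Gr\"onwall can be closed against the time-integrable quantity $\|\nabla\o u_1\|^2_{L^2(D)^3}$.
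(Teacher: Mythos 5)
Your proposal is correct and follows essentially the same route as the paper: a Galerkin approximation with energy estimates and Aubin--Lions compactness for existence (with the strong $L^2(0,T;L^2(D))$ convergence handling the reaction term), the identical splitting $\o{\alpha}(\o{u}_2)(\o{u}_2-\o{u}_1)^2+\o{u}_1(\o{\alpha}(\o{u}_2)-\o{\alpha}(\o{u}_1))(\o{u}_2-\o{u}_1)$ with the $H_0^1(D)\hookrightarrow L^4(D)$ embedding, Young absorption and Gr\"{o}nwall for uniqueness, and testing with $\partial_t\o{u}_n$ plus elliptic $H^2$-regularity for the improved estimates. The only (harmless) deviations are that you apply the elliptic regularity to the limit equation read pointwise in $t$ rather than to the Galerkin approximants as the paper does (arguably the cleaner choice, since the approximants satisfy only the projected equation), and that you explicitly account for the antisymmetric part of $\o{A}$ when integrating the diffusion term against $\partial_t\o{u}_n$, a point the paper passes over silently.
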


\subsection{Convergence of $S^\e_1$}\label{subs42}

\begin{lemma}
\label{converg}
Assume that $u^\e$ is a sequence of $\mathcal{F}_t$ - measurable processes in $L^2(D)$, uniformly bounded in $L^\infty(\Omega, W^{1, 2}(0, T;L^{2}(D)))$, $\phi^\e$ a sequence of $\mathcal{F}_t$ - measurable processes in $L^2(D)$, such that $\phi^\e \in L^\infty(\Omega;C^u([0, T] ;L^2(D)))$ uniformly bounded and equiuniform continuous with respect to $\e >0$ and $\omega\in\Omega$. Let the sequence $v^\e$ satisfy the equation
\begin{equation} \label{ve}
\left\{
\begin{array}{rll}
d v^\e(t, x) &= -\dfrac{1}{\e} (v^\e(t, x)-u^\e(t, x))dt + \sqrt {\dfrac{Q}{\e}} dW(t, x) &\mbox{ in }\ [0, T] \times D, \\
v^\e(0, x) & = v_0^\e(x)&\mbox{ in }\ D, 
\end{array}\right. 
\end{equation}
with the sequence $v_0^\e$ uniformly bounded in $L^2(D)$. Then we have that:
\begin{equation}
\lim_{\e\to 0} \E \left| \int_0^T \int_D \left(\alpha^\e(v^\e(t)) - \o{\alpha}^\e (u^\e(t))\right) \phi^\e(t) dx dt\right| =0. 
\end{equation}
\end{lemma}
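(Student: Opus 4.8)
The plan is to run a Khasminskii-type averaging argument: discretize $[0,T]$ into small subintervals, freeze the slow variable $u^\e$ on each one, and apply Lemma \ref{key} to the frozen fast dynamics. The process to feed into Lemma \ref{key} is
\[
\Phi^\e(t,z) = \int_D \alpha^\e(z)\,\phi^\e(t)\,dx, \qquad z\in L^2(D).
\]
Since $\alpha$ is bounded and Lipschitz uniformly in $y$, and $\phi^\e$ is uniformly bounded and equiuniformly continuous in $L^\infty(\Omega;C^u([0,T];L^2(D)))$, the reduced process $\Phi^\e(t,z)-\Phi^\e(t,0)=\int_D(\alpha^\e(z)-\alpha^\e(0))\phi^\e(t)\,dx$ is Lipschitz in $z$ with norm $\|\Phi^\e\|\le C$ independent of $\e$, and its time-modulus is bounded by $[\alpha]\,[\phi^\e](\cdot)$, which tends to $0$ as its argument does, uniformly in $\e$. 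The key algebraic point is that by \eqref{defoalphae} and Fubini $\int_{L^2(D)}\Phi^\e(t,z)\,d\mu^\xi(z)=\int_D\o{\alpha^\e}(\xi)\phi^\e(t)\,dx$, so $\int_D(\alpha^\e(v^\e)-\o{\alpha^\e}(u^\e))\phi^\e\,dx$ is exactly the averaging defect $\Phi^\e(t,v^\e(t))-\int_{L^2(D)}\Phi^\e(t,z)\,d\mu^{u^\e(t)}(z)$; note the constant-in-$z$ part $\Phi^\e(t,0)$ cancels between the two terms, which is why we may work with the reduced process.

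Next I would fix a mesh $\delta=\delta(\e)$, set $t_k=k\delta$, and on each $[t_k,t_{k+1}]$ introduce the frozen auxiliary process $\hat v^\e_k$ solving \eqref{ve} with the drift coefficient $u^\e$ replaced by the constant $u^\e(t_k)$, driven by the same Brownian motion and started at $\hat v^\e_k(t_k)=v^\e(t_k)$. I then split the subinterval contribution as $I_k+II_k+III_k$, where $I_k$ replaces $v^\e$ by $\hat v^\e_k$ inside $\alpha^\e$, $III_k$ replaces $u^\e(t)$ by $u^\e(t_k)$ inside $\o{\alpha^\e}$, and $II_k=\int_{t_k}^{t_{k+1}}\bigl(\Phi^\e(t,\hat v^\e_k(t))-\int_{L^2(D)}\Phi^\e(t,z)\,d\mu^{u^\e(t_k)}(z)\bigr)dt$ is the genuinely averaged term with frozen parameter.

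For $I_k$ and $III_k$ the regularity of $u^\e$ is the crucial input. Subtracting the mild formulas gives $v^\e(t)-\hat v^\e_k(t)=\tfrac1\e\int_{t_k}^t e^{-(t-s)/\e}(u^\e(s)-u^\e(t_k))\,ds$, and since by hypothesis $u^\e$ is uniformly bounded in $L^\infty(\Omega;W^{1,2}(0,T;L^2(D)))$ (which holds by Theorem \ref{threg}), the Cauchy–Schwarz bound $\|u^\e(s)-u^\e(t_k)\|_{L^2(D)}\le C\sqrt\delta$ holds a.s., whence $\|v^\e(t)-\hat v^\e_k(t)\|_{L^2(D)}\le C\sqrt\delta$. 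Using the Lipschitz bound for $\alpha^\e$ and the analogous Lipschitz bound for $\o{\alpha^\e}$ (which follows because $\mu^\xi$ is Gaussian with mean $\xi$, so $\o{\alpha^\e}(\xi)=\E[\alpha^\e(\xi+\zeta)]$ with $\zeta\sim N(0,Q)$, giving $\|\o{\alpha^\e}(\xi_1)-\o{\alpha^\e}(\xi_2)\|_{L^2(D)}\le[\alpha]\|\xi_1-\xi_2\|_{L^2(D)}$), each of $\E|I_k|$ and $\E|III_k|$ is $O(\delta^{3/2})$, and summing over the $N=T/\delta$ intervals yields $O(\sqrt\delta)\to0$.

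The term $II_k$ is handled by Lemma \ref{key} after the time change $t=t_k+\e\tau$, which turns the frozen version of \eqref{ve} into the slow system \eqref{vr} with parameter $\xi=u^\e(t_k)$, initial datum $\eta=v^\e(t_k)$, and a standard Brownian motion in $\tau$; thus $\hat v^\e_k(t_k+\e\tau)=v^{\xi,\eta}(\tau)$ and the averaging window becomes $\Delta=\delta/\e$. Conditioning on $\mathcal F_{t_k}$, using $\E(\|v^\e(t_k)\|+\|u^\e(t_k)\|)\le C$ (the uniform $L^2$ bound for $v^\e$ obtained as in \eqref{est4}), $\|\Phi^\e\|\le C$, and the fact that the time-modulus in the rescaled variable at window $\Delta$ equals $[\Phi^\e](\e\Delta)=[\Phi^\e](\delta)\le C[\phi^\e](\delta)$, Lemma \ref{key} gives $\E|II_k|\le C\delta\bigl(\sqrt{\e/\delta}+\sqrt{[\phi^\e](\delta)}\bigr)$, so summing yields $\sum_k\E|II_k|\le C\bigl(\sqrt{\e/\delta}+\sqrt{[\phi^\e](\delta)}\bigr)$. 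Finally I would choose $\delta=\delta(\e)$ with $\delta\to0$ and $\e/\delta\to0$, for instance $\delta=\sqrt\e$, so that all three contributions vanish. I expect the main obstacle to be the bookkeeping of the two competing scales: the ergodic averaging in $II_k$ needs the window $\delta/\e$ large, while the freezing errors $I_k,III_k$ and the modulus $[\phi^\e](\delta)$ need $\delta$ small; checking that the time rescaling is compatible with the filtration and with the conditional form of Lemma \ref{key} is the other delicate point.
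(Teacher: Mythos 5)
Your proposal is correct and follows essentially the same route as the paper: a Khasminskii discretization with mesh $\delta=\sqrt{\e}$, freezing errors controlled by the $W^{1,2}$-H\"{o}lder bound $\|u^\e(t)-u^\e(s)\|_{L^2(D)}\le C|t-s|^{1/2}$ together with the Lipschitz property of $\alpha^\e$ and $\o{\alpha^\e}$, and the frozen averaged term handled by rescaling time so that the auxiliary process solves \eqref{vr} and then invoking Lemma \ref{key}, yielding the same $\sqrt{\e/\delta}+\sqrt{[\phi^\e](\delta)}$ bound. The only cosmetic difference is that you restart the frozen process at the true value $v^\e(t_k)$ on each subinterval (so its moment bound is immediate from the hypotheses), whereas the paper uses a single auxiliary process $\wt{v}^\e$ driven by the piecewise-constant $\wt{u}^\e$ and bounds $\E\|\wt{v}^\e(k\delta^\e)\|^2_{L^2(D)}$ by induction.
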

\begin{proof}
Fix $n^\e$ a positive integer and let $\delta^\e =\dfrac{T}{n^\e}$. We define $\wt{u}^\e$ as the piecewise constant function:
\begin{equation}\label{wtphie}
\wt{u}^\e(t)=u^\e(k\delta^\e) \ \mbox{ for } t\in[k\delta^\e, (k+1)\delta^\e). 
\end{equation}
We define also the sequence $\wt{v}{^\e}$ as the solution of:
\begin{equation} \label{wtve}
\left\{
\begin{array}{rll}
d \wt{v}^\e(t, x) &= -\dfrac{1}{\e} (\wt{v}^\e(t, x)-\wt{u}^\e(t, x))dt + \sqrt {\dfrac{Q}{\e}} dW(t, x) &\mbox{ in }\ [0, T] \times D, \\
\wt{v}^\e(0, x) & = v_0^\e(x)&\mbox{ in }\ D. 
\end{array}
\right. 
\end{equation}
A simple calculation shows that the sequence $u^\e$ is H\"{o}lder continuous, uniformly in $\e$ and $\omega$:
\begin{equation}\nonumber
\begin{split}
u^\e(t) - u^\e(s) &=\int_s^t \dfrac{\partial u^\e}{\partial t}(r) dr \Rightarrow\\
\|u^\e(t) - u^\e(s)\|_{L^2(D)}&\leq (t-s)^{\frac{1}{2}}\left(\int_0^T \left\| \dfrac{\partial u^\e}{\partial t}(r)\right\|^2_{L^{2}(D)} dr \right)^{\frac{1}{2}}\leq C(t-s)^{\frac{1}{2}}. 
\end{split}
\end{equation}
This implies that:
\begin{equation}\label{difue}
\lim_{\delta^\e \to 0} \| \wt{u}^\e - u^\e \|_{L^\infty(0, T; L^2(D))} =0, 
\end{equation}
uniformly in $\e$ and $\omega$. 
From \eqref{ve} and \eqref{wtve} we get that $\wt{v}^\e(t) - v^\e(t) = \dfrac{1}{\e}\displaystyle\int_0^t e^{\frac{-(t-s)}{\e}} \left(\wt{u}^\e(s) - u^\e(s)\right)ds$, so we also have that
\begin{equation}\label{difve}
\lim_{\delta^\e \to 0} \| \wt{v}^\e - v^\e \|_{L^\infty(0, T; L^2(D))} =0, 
\end{equation}
uniformly in $\e$ and $\omega$. 

Now
\begin{equation}\nonumber
\begin{split}
\int_0^T \int_D \left(\alpha^\e(v^\e(t)) - \o{\alpha}^\e (u^\e(t))\right) \phi^\e(t) dx dt -\int_0^T \int_D \left(\alpha^\e(\wt{v}^\e(t)) - \o{\alpha}^\e (\wt{u}^\e(t))\right) \phi^\e(t) dx dt=\\
\int_0^T \int_D \phi^\e(t)\left(\alpha^\e(v^\e(t))-\alpha^\e(\wt{v}^\e(t)) \right) dxdt + \int_0^T \int_D \phi^\e(t)\left(\o{\alpha}^\e(\wt{u}^\e(t))-\o{\alpha}^\e(u^\e(t)) \right) dxdt, \end{split}
\end{equation}
But:
\begin{equation}\nonumber
\begin{split}
&\int_0^T \int_D \phi^\e(t)\left(\alpha^\e(v^\e(t))-\alpha^\e(\wt{v}^\e(t)) \right) dxdt\leq\\
&\|\phi^\e\|_{L^\infty(\Omega; C([0, T]; L^2(D)))}\int_0^T \left(\int_D |\alpha^\e(v^\e(t)) - \alpha^\e (\wt{v}^\e(t)|^{2}dx \right)^{1/2}\leq\\
&\|\phi^\e\|_{L^\infty(\Omega; C([0, T]; L^2(D)))} \int_0^T \left(\int_D [\alpha]^2 \left|v^\e(t) - \wt{v}^\e(t)\right|^2 dx \right)^{1/2}\leq\\
&CT \|\phi^\e\|_{L^\infty(\Omega; C([0, T]; L^2(D)))} [\alpha]\| \wt{v}^\e - v^\e \|_{L^\infty(0, T; L^2(D))}, 
\end{split}
\end{equation}
and similarly 
\begin{equation}\nonumber
\begin{split}
&\int_0^T \int_D \phi^\e(t)\left(\o{\alpha}^\e(\wt{u}^\e(t))-\o{\alpha}^\e(u^\e(t)) \right) dxdt\leq\\
&C T\|\phi^\e\|_{C([0, T]; L^\infty(\Omega;L^2(D)))} [\alpha] \| \wt{u}^\e - u^\e \|_{L^\infty(0, T; L^2(D))}, 
\end{split}
\end{equation}
which will imply based on \eqref{difue} and \eqref{difve} that
\begin{equation}\label{difalpha}
\lim_{\delta^\e \to 0}\E\left| \int_0^T \int_D \left(\alpha^\e(v^\e(t)) - \o{\alpha}^\e (u^\e(t))\right) \phi^\e(t) dx dt -\int_0^T \int_D \left(\alpha^\e(\wt{v}^\e(t)) - \o{\alpha}^\e (\wt{u}^\e(t))\right) \phi^\e(t) dx dt\right| =0, 
\end{equation}
uniformly in $\e$. 

Let us study now the term $\displaystyle \int_0^T \int_D \left(\alpha^\e(\wt{v}^\e(t)) - \o{\alpha}^\e (\wt{u}^\e(t))\right) \phi^\e(t) dx dt$. 
\begin{equation}
\begin{split}\label{eq3}
\int_0^T \int_D \left(\alpha^\e(\wt{v}^\e(t)) - \o{\alpha}^\e (\wt{u}^\e(t))\right) \phi^\e(t) dx dt &=\sum_{k=0}^{n^\e-1} \int_{k \delta^\e}^{(k+1)\delta^\e} \int_D\left(\alpha^\e(\wt{v}^\e(t)) - \o{\alpha}^\e (\wt{u}^\e(t))\right) \phi^\e(t) dx dt. \end{split}
\end{equation}

The process defined by 
\begin{equation}\label{deffe1}
F^\e(s, \eta)= \displaystyle\int_D \alpha^\e(\eta) \phi^\e \left(\e s\right)dx
\end{equation}
belongs to $C^u([0, T / \e ] ;Lip(L^2(D)))$, 
with 
$$| F^\e (s, 0) | \leq |\alpha| \|\phi^\e\|_{C([0, T]; L^2(D))}, $$
$$ [F^\e(s, \cdot)] \leq [\alpha] \left\| \phi^\e\right\|_{C([0, T];L^2(D))}, $$ 
so
$$ \| F^\e (s)\|_{Lip(L^2(D))} \leq (|\alpha|+[\alpha]) \|\phi^\e\|_{C([0, T]; L^2(D))}$$
and 
$$[F^\e] (r) \leq(|\alpha|+[\alpha]) [\phi^\e]_{C^u([0, T]; L^2(D))}(\e r), $$ so we can apply Lemma \ref{key} on the interval $[k \delta_\e / \e, (k+1) \delta_\e / \e]$ for $\xi = u^\e(k\delta^\e)$ and $\eta=\wt{v}^\e(k \delta^\e)$ to the sequence $ F^\e$:

\begin{equation}\label{eq1}
\begin{split}
&\E \left(\left| \frac{\e}{\delta^\e} \int_{k\delta^\e / \e}^{(k+1)\delta^\e / \e} F^\e(s, v^{u^\e(k\delta^\e), \wt{v}^\e(k \delta^\e)}(s)) ds- \int_{L^2(D)} F^\e (s, z) d\mu^{u^\e(k\delta^\e)}(z)\right| \Big | \mathcal{F}_{k\delta^\e}\right)\leq \\
&c \left(1+\|\wt{v}^\e(k \delta^\e)\|_{L^2(D)}+\|u^\e(k\delta^\e)\|_{L^2(D)}\right)\left( \frac{\sqrt{\e}\|F^\e\| }{\sqrt{\delta_\e}} + \sqrt{\|F^\e\| [F^\e]( \delta_\e / \e)} \right)\leq \\
&C \left(1+\|\wt{v}^\e(k \delta^\e)\|_{L^2(D)}+\|u^\e(k\delta^\e)\|_{L^2(D)}\right)\left( \frac{\sqrt{\e}\|\phi^\e\|}{\sqrt{\delta_\e}} + \sqrt{\|\phi^\e\|\left[ \phi^\e \right](\delta_\e )} \right). \\
\end{split}
\end{equation}

But by a change of variables $\wt{v}^\e\left(\e t\right)$ is a solution for the equation \eqref{vr} on the interval $[k\delta^\e / \e, (k+1)\delta^\e / \e]$ with $\xi = u^\e(k \delta^\e)$ and $\eta = \wt{v}^\e(k \delta^\e)$, so 
$$ v^{u^\e(k\delta^\e), \wt{v}^\e(k \delta^\e)}(s) = \wt{v}^\e\left(\e s\right). $$
Also using formula \eqref{l1}:
\begin{equation}\nonumber
\begin{split}
\E\left(\| \wt{v}^\e((k+1)\delta^\e)\|^2_{L^2(D)}|\mathcal{F}_{k\delta^\e}\right) \leq &c\left(\| \wt{v}^\e(k\delta^\e)\|^2_{L^2(D)} e^{-2\delta^\e / \e} + \| u^\e(k\delta^\e)\|^2_{L^2(D)}+1\right)\Rightarrow\\
\| \wt{v}^\e((k+1)\delta^\e)\|^2_{L^2(\Omega, L^2(D))}\leq & c \left(\| \wt{v}^\e(k\delta^\e)\|^2_{L^2(\Omega, L^2(D))} e^{-2\delta^\e/\e} + \| u^\e\|^2_{L^2(\Omega, C([0, T];L^2(D)))}+1\right), 
\end{split}
\end{equation}
and we obtain by induction that:
\begin{equation}\nonumber
\| \wt{v}^\e(k\delta^\e)\|^2_{L^2(\Omega, L^2(D))} \leq c^ke^{-2k\delta^\e/\e} \| \wt{v}^\e(0\|^2_{L^2(\Omega, L^2(D))} +\left(\sum_{i=1}^k c^i e^{-2i\delta^\e/\e}\right) \left(\| u^\e\|^2_{L^2(\Omega, C([0, T];L^2(D)))}+1\right), 
\end{equation}
so for $\e/\delta^\e$ small enough we get the estimate:
\begin{equation}\label{estweve}
\| \wt{v}^\e(k\delta^\e)\|^2_{L^2(\Omega, L^2(D))} \leq C \left(\| u^\e\|^2_{L^2(\Omega, C([0, T];L^2(D)))}+1\right)\ ,\ \forall k >0. 
\end{equation}
The equation \eqref{eq1} now becomes:
\begin{equation}\label{eq2}
\begin{split}
&\E \left| \frac{\e}{\delta^\e} \int_{k\delta^\e / \e}^{(k+1)\delta^\e / \e} F^\e\left(s, \wt{v}^\e\left(\e s\right)\right) ds- \int_{L^2(D)}F^\e \left(s, z\right) d\mu^{u^\e(k\delta^\e)}(z)\right| =\\
&\E \left| \frac{1}{\delta^\e} \int_{k\delta^\e}^{(k+1)\delta^\e} F^\e(\frac{ s}{\e}, \wt{v}^\e\left(s\right)) ds- \int_{L^2(D)}F^\e (\frac{ s}{\e}, z) d\mu^{u^\e(k\delta^\e)}(z)\right|\leq \\
&C \left(1+\| u^\e\|^2_{L^2(\Omega, C([0, T];L^2(D)))}\right)\left( \frac{\sqrt{\e}\|\phi^\e\|}{\sqrt{\delta_\e}} + \sqrt{\|\phi^\e\|\left[\phi^\e \right](\delta_\e )} \right). 
\end{split}
\end{equation}
If we sum over all $0\leq k \leq n^\e-1$ and go back to the equation \eqref{eq3} we obtain that
\begin{equation}
\begin{split} 
&\E \left| \int_0^T \int_D \left(\alpha^\e(\wt{v}^\e(t)) - \o{\alpha}^\e (\wt{u}^\e(t))\right) \phi^\e(t) dx dt\right| \\
\leq & C \left(1+\|u^\e\|_{C([0, T]; L^\infty(\Omega, L^2(D)))}\right)\left( \frac{\sqrt{\e}\|\phi^\e\| }{\sqrt{\delta_\e}} + \sqrt{\|\phi^\e\|\left[ \phi^\e\right](\delta_\e )} \right). 
\end{split} 
\end{equation}
If we choose now $n^\e = T/ \sqrt{\e}$ use the equiuniform continuity of $\phi^\e$ and the convergences given by \eqref{difalpha} we obtain that 
$$\lim_{\e\to 0}\E \left| \int_0^T \int_D \left(\alpha^\e(v^\e(t)) - \o{\alpha}^\e (u^\e(t))\right) \phi^\e(t) dx dt\right| =0, $$
which proves the Lemma. 
\end{proof}
The convergence to $0$ of $S_1^\e$ is an imediate consequence:
\begin{lemma}\label{lemmaconv1}
If $\phi^\e$ is a sequence uniformly bounded in $H_0^1(D)$ and $\psi\in C[0, T]$ then:
\begin{equation}
\label{convre}
\begin{split}
\lim_{\e\to 0} \E\left| \int_0^T \int_D \left(\alpha^\e(v^\e(t)) - \o{\alpha^\e}(u^\e(t))\right)u^\e(t)\phi^\e \psi(t) dxdt\right| =& 0. 
\end{split}
\end{equation}
\begin{proof}
As $u^\e$ is uniformly bounded in $L^\infty(\Omega, C([0, T]; H_0^1(D))) \cap L^\infty(\Omega, W^{1, 2}(0, T; L^2(D)))$ and $\Psi\in C[0, T]$, then the sequence $u^\e \phi^\e \psi $ is uniformly bounded and equiuniformly continuous in $C([0, T] ; L^\infty(\Omega;L^2(D)))$, so we can apply the previous Lemma. 
\end{proof}
\end{lemma}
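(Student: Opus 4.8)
The plan is to deduce \eqref{convre} directly from Lemma \ref{converg} by choosing, as the test sequence there, the product $\Phi^\e(t) := u^\e(t)\,\phi^\e\,\psi(t)$. Substituting $\Phi^\e$ in place of the generic test function in the conclusion of Lemma \ref{converg} turns its limit into exactly the quantity in \eqref{convre}, and the coupling hypotheses on the pair $(u^\e, v^\e)$ are already met: $u^\e$ is uniformly bounded in $L^\infty(\Omega; W^{1,2}(0, T; L^2(D)))$ by Theorems \ref{thexun} and \ref{threg}, $v^\e$ solves the system \eqref{ve} (the $v$-equation of \eqref{system1}), and $v_0^\e$ is uniformly bounded in $L^2(D)$. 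Hence the only thing left to verify is that $\Phi^\e$ fulfils the three requirements imposed on the test function in Lemma \ref{converg}: $\mathcal{F}_t$-measurability as an $L^2(D)$-valued process, uniform boundedness in $L^\infty(\Omega; C^u([0, T]; L^2(D)))$, and equiuniform continuity in $t$, uniformly in $\e$ and $\omega$.

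Measurability is immediate, since $u^\e$ is $\mathcal{F}_t$-measurable while $\phi^\e$ and $\psi$ are deterministic. For the uniform bound I would use the improved regularity of Theorem \ref{threg}, namely $\sup_\e \|u^\e\|_{L^\infty(\Omega; C([0, T]; H_0^1(D)))} \le C_T$. Since $D \subset \R^3$, the Sobolev embedding $H_0^1(D) \hookrightarrow L^6(D)$ and H\"older's inequality give $\|u^\e(t)\phi^\e\|_{L^2(D)} \le \|u^\e(t)\|_{L^6(D)}\|\phi^\e\|_{L^3(D)} \le C\|u^\e(t)\|_{H_0^1(D)}\|\phi^\e\|_{H_0^1(D)}$; multiplying by the bounded scalar $\psi(t)$ and taking the supremum over $t$ and $\omega$ yields the desired uniform bound in $C([0, T]; L^2(D))$, $\mathbb{P}$-a.s.

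The step that requires genuine care is the equiuniform continuity, which I expect to be the main obstacle. I would split
\begin{equation*}
\Phi^\e(t) - \Phi^\e(s) = u^\e(t)\phi^\e\bigl(\psi(t)-\psi(s)\bigr) + \bigl(u^\e(t)-u^\e(s)\bigr)\phi^\e\,\psi(s).
\end{equation*}
The first term is handled by the uniform bound on $\|u^\e(t)\phi^\e\|_{L^2(D)}$ together with the uniform continuity of $\psi$ on $[0, T]$, so it is small uniformly in $\e, \omega$. The difficulty is the second term: the H\"older continuity of $u^\e$ proved inside Lemma \ref{converg} (a consequence of the $W^{1,2}(0, T; L^2(D))$ bound) holds only in $L^2(D)$, giving $\|u^\e(t)-u^\e(s)\|_{L^2(D)} \le C|t-s|^{1/2}$, whereas pairing with $\phi^\e \in H_0^1(D) \hookrightarrow L^6(D)$ asks for control in $L^3(D)$. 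I would bridge this by interpolation: the difference $u^\e(t)-u^\e(s)$ is bounded in $H_0^1(D) \hookrightarrow L^6(D)$ and small in $L^2(D)$, and the inequality $\|w\|_{L^3(D)} \le \|w\|_{L^2(D)}^{1/2}\|w\|_{L^6(D)}^{1/2}$ then gives $\|u^\e(t)-u^\e(s)\|_{L^3(D)} \le C|t-s|^{1/4}$ uniformly in $\e, \omega$. Consequently $\|(u^\e(t)-u^\e(s))\phi^\e\|_{L^2(D)} \le \|u^\e(t)-u^\e(s)\|_{L^3(D)}\|\phi^\e\|_{L^6(D)} \to 0$ as $|t-s|\to 0$, uniformly in $\e$ and $\omega$, which is precisely the equiuniform continuity.

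With all three hypotheses checked, Lemma \ref{converg} applies verbatim to the sequence $\Phi^\e$ and delivers \eqref{convre}, finishing the proof. As indicated, the one nontrivial point is the interpolation argument, which converts the purely $L^2$ temporal regularity of $u^\e$ into continuity of the product $u^\e\phi^\e\psi$ in $L^2(D)$; the remaining measurability and boundedness verifications are routine given the a priori estimates of Theorems \ref{thexun} and \ref{threg}.
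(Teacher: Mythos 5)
Your proposal follows exactly the paper's route: the paper's own proof consists of applying Lemma \ref{converg} with the product $u^\e\phi^\e\psi$ as the test sequence, asserting that it is uniformly bounded and equiuniformly continuous in $C([0,T];L^\infty(\Omega;L^2(D)))$. Your verification of that assertion --- Sobolev embedding $H_0^1(D)\hookrightarrow L^6(D)$, H\"older's inequality, and the $L^2$--$L^6$ interpolation giving $\|u^\e(t)-u^\e(s)\|_{L^3(D)}\le C|t-s|^{1/4}$ --- supplies precisely the details the paper leaves implicit, so the two arguments are essentially identical.
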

\subsection{Convergence of $S^\e_2$}\label{subs43}
\begin{lemma}\label{lemmaconv1''}
Assume $u^\e$ is a sequence uniformly bounded in $L^\infty(\Omega, C([0, T], H_0^1(D)))$ that converges in distribution to $\o{u}$ in $C([0, T], L^2(D)))$. Then, for any sequence $\phi^\e$ uniformly bounded in $H_0^1(D)$ and $\psi\in C[0, T]$ we have:
\begin{equation}\label{convre''}
\lim_{\e\to 0}\E \left|\int_0^T\int_D(\o{\alpha^{\e}}(u^\e(t))u^\e(t) - \o{\alpha^{\e}}(\o{u}(t))\o{u}(t)) \phi^\e\psi(t) dxdt \right| =0. 
\end{equation}
\end{lemma}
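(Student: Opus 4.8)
The plan is to reduce \eqref{convre''} to the single convergence $\E\|u^\e-\o u\|_{C([0,T];L^2(D))}\to 0$, using the boundedness and Lipschitz continuity of the averaged operator $\o{\alpha^\e}$. First I would split the integrand as
\begin{equation*}
\o{\alpha^\e}(u^\e)u^\e-\o{\alpha^\e}(\o u)\o u=\o{\alpha^\e}(u^\e)\left(u^\e-\o u\right)+\left(\o{\alpha^\e}(u^\e)-\o{\alpha^\e}(\o u)\right)\o u,
\end{equation*}
so that the integral in \eqref{convre''} becomes a sum of two terms, to be controlled by the two analytic properties of $\o{\alpha^\e}$ described next.

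The two ingredients are that $\o{\alpha^\e}$ is uniformly bounded and uniformly Lipschitz from $L^2(D)$ into $L^2(D)$. Boundedness is immediate from the definition \eqref{defoalphae}: $\|\o{\alpha^\e}(\xi)\|_{L^\infty(D)}\le\int_{L^2(D)}\|\alpha^\e(\eta)\|_{L^\infty(D)}\,d\mu^\xi(\eta)\le|\alpha|$. For the Lipschitz bound I would use that $\mu^\xi$ is Gaussian with mean $\xi$ and covariance $Q$, so that $\mu^{\xi_2}$ is the pushforward of $\mu^{\xi_1}$ under the translation $\eta\mapsto\eta+\xi_2-\xi_1$; hence
\begin{equation*}
\o{\alpha^\e}(\xi_1)-\o{\alpha^\e}(\xi_2)=\int_{L^2(D)}\left(\alpha^\e(\eta)-\alpha^\e(\eta+\xi_2-\xi_1)\right)d\mu^{\xi_1}(\eta),
\end{equation*}
and the uniform Lipschitz property of $\alpha$ gives $\|\o{\alpha^\e}(\xi_1)-\o{\alpha^\e}(\xi_2)\|_{L^2(D)}\le[\alpha]\|\xi_1-\xi_2\|_{L^2(D)}$, uniformly in $\e$. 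Using $|\o{\alpha^\e}(u^\e)|\le|\alpha|$, the first term is bounded by $C\|\psi\|_\infty\|\phi^\e\|_{L^2(D)}\int_0^T\|u^\e(t)-\o u(t)\|_{L^2(D)}\,dt$; using the Lipschitz bound together with the imbedding $H_0^1(D)\hookrightarrow L^4(D)$ to estimate $\|\o u(t)\phi^\e\|_{L^2(D)}\le\|\o u(t)\|_{L^4(D)}\|\phi^\e\|_{L^4(D)}$, the second is bounded by $C\|\psi\|_\infty\|\phi^\e\|_{H_0^1(D)}\|\o u\|_{L^2(0,T;H_0^1(D))}\|u^\e-\o u\|_{C([0,T];L^2(D))}$. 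Since $\phi^\e$ is bounded in $H_0^1(D)$ and $\o u\in L^2(0,T;H_0^1(D))$ (inheriting the uniform bound on $u^\e$), both terms are dominated by a constant times $\|u^\e-\o u\|_{C([0,T];L^2(D))}$.

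It then remains to show $\E\|u^\e-\o u\|_{C([0,T];L^2(D))}\to 0$. Invoking Skorokhod's representation theorem, I would pass to versions having the same laws for which $u^\e\to\o u$ almost surely in $C([0,T];L^2(D))$; since the uniform estimate \eqref{est2} provides $\sup_\e\|u^\e\|_{L^\infty(\Omega;C([0,T];L^2(D)))}\le C_T$, hence an integrable majorant, dominated convergence yields the claim. Taking expectations in the two bounds above and letting $\e\to 0$ then gives \eqref{convre''}.

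I expect the main obstacle to be twofold. The first is the uniform-in-$\e$ Lipschitz continuity of $\o{\alpha^\e}$ on $L^2(D)$: as $\xi\mapsto\mu^\xi$ admits no obvious strong continuity, one must exploit the Gaussian translation structure of $\mu^\xi$ to transfer the Lipschitz constant of $\alpha$ to $\o{\alpha^\e}$. The second is the upgrade from the distributional convergence of $u^\e$ to the $L^1(\Omega)$ convergence of $\|u^\e-\o u\|_{C([0,T];L^2(D))}$, where the Skorokhod coupling together with the uniform bound \eqref{est2} is exactly what renders dominated convergence applicable.
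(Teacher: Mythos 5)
Your proof follows essentially the same route as the paper's: the same algebraic splitting into a term handled by the boundedness of $\o{\alpha^\e}$ and a term handled by its uniform Lipschitz property, then H\"older plus Sobolev embedding, reducing the claim to $\E\|u^\e-\o{u}\|_{C([0,T];L^2(D))}\to 0$. Two differences are worth recording. First, your derivation of the uniform Lipschitz bound for $\o{\alpha^\e}$ from the translation structure of the Gaussian measure $\mu^\xi$ is a genuine addition: the paper simply invokes ``the uniform Lipschitz condition of $\o{\alpha^\e}$'' without proof, and your argument (which in fact yields the pointwise estimate $|\o{\alpha^\e}(\xi_1)(x)-\o{\alpha^\e}(\xi_2)(x)|\le[\alpha]\,|\xi_1(x)-\xi_2(x)|$) is the right justification. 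Second, the paper writes the Lipschitz term as $u^\e\left(\o{\alpha^\e}(u^\e)-\o{\alpha^\e}(\o{u})\right)\phi^\e\psi$, attaching the $L^4$ factor to $u^\e$, whose $\omega$-uniform $C([0,T];H_0^1(D))$ bound is a hypothesis; your variant attaches it to $\o{u}$ and therefore needs the asserted-but-not-proved claim that $\o{u}$ inherits an $\omega$-uniform $L^2(0,T;H_0^1(D))$ bound. This is repairable (a.s.\ membership of $\o u$ in the compact set, or weak lower semicontinuity of the norm), but the paper's choice avoids the issue entirely.

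The genuine gap is your final step. Both $\E\|u^\e-\o{u}\|_{C([0,T];L^2(D))}$ and the quantity in \eqref{convre''} are functionals of the \emph{joint} law of the pair $(u^\e,\o{u})$, whereas Skorokhod's representation theorem only reproduces the \emph{marginal} laws: it supplies $(\wt{u^\e},\wt{\o{u}})$ with $\wt{u^\e}$ distributed as $u^\e$, $\wt{\o{u}}$ distributed as $\o{u}$, and $\wt{u^\e}\to\wt{\o{u}}$ a.s., but the joint law of $(\wt{u^\e},\wt{\o{u}})$ need not equal that of $(u^\e,\o{u})$. So dominated convergence gives $\wt{\E}\|\wt{u^\e}-\wt{\o{u}}\|\to 0$ and says nothing about $\E\|u^\e-\o{u}\|$ on the original space. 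Indeed, from marginal convergence in distribution alone the conclusion fails: take $\alpha\equiv 1$, $u^\e\equiv \zeta g$ with $\zeta$ a symmetric $\pm 1$-valued random variable and $0\neq g\in H_0^1(D)$, and $\o{u}=-\zeta g$; then $u^\e\to\o{u}$ in distribution (the laws coincide), yet the quantity in \eqref{convre''} equals $2\left|\int_0^T\int_D g\,\phi^\e\psi\, dx\, dt\right|$, which stays bounded away from $0$ for suitable $\phi^\e=\phi$, $\psi$. The paper's own one-line conclusion (``the uniform bounds for $u^\e$ now give \eqref{convre''}'') is admittedly terse at exactly this point, but the intended reading is that the lemma is applied, in the proof of Theorem \ref{thconv1}, after Skorokhod has been performed once and for all, i.e.\ to a sequence converging a.s.\ (hence in probability) to its limit on the working probability space; there, convergence in probability together with the uniform integrability coming from \eqref{est2} gives $\E\|u^\e-\o{u}\|_{C([0,T];L^2(D))}\to 0$ directly by Vitali's theorem, with no second application of Skorokhod. (Alternatively the step is valid when $\o{u}$ is deterministic, since convergence in distribution to a constant upgrades to convergence in probability --- but determinism of $\o u$ is only established later.) As written, your transfer back to the original space does not work.
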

\begin{proof}
We compute:
\begin{equation}\begin{split}
(\o{\alpha^{\e}}(u^\e(t))u^\e(t) - \o{\alpha^{\e}}(\o{u}(t))\o{u}(t)) \phi^\e\psi(t)=&\o{\alpha^{\e}}(u^\e(t))(u^\e(t) - \o{u}(t)) \phi^\e\psi(t)\\
+ &u^\e(t)(\o{\alpha^{\e}}(u^\e(t))- \o{\alpha^{\e}}(\o{u}(t))) \phi^\e\psi(t), 
\end{split}
\end{equation}
so
\begin{equation}\begin{split}
&\E \left|\int_0^T\int_D(\o{\alpha^{\e}}(u^\e(t))u^\e(t) - \o{\alpha^{\e}}(\o{u}(t))\o{u}(t)) \phi^\e\psi(t) dxdt \right|\leq\\
&C\E \int_0^T\|u^\e(t) - \o{u}(t) \|_{L^2(D)}dt, 
\end{split}
\end{equation}
based on the uniform Lipschitz condition of $\o{\alpha^{\e}}$ and the imbedding of $H_0^1(D)$ into $L^2(D)$. The uniform bounds for $u^\e$ now give \eqref{convre''}. 
\end{proof}
\subsection{Convergence of $S^\e_3$}
\label{subs44}
\begin{lemma}
\label{lemmaconv1'}
For fixed $\o{u}\in L^\infty (\Omega; C([0, T]; L^2(D)))$, $\phi^\e\in H_0^1(D)$ uniformly bounded and $\Psi\in C[0, T]$ let us define by $S^\e_3$ the integral $\displaystyle \int_0^T \int_D \left(\o{\alpha^{\e}}(\o{u}(t))\o{u}(t) - \o{\alpha}(\o{u}(t))\o{u}(t)\right)\phi^\e \psi (t)dxdt$. Then:
\begin{equation}
\label{convre'}
\begin{split}
\lim_{\e\to 0} \E\left| S^\e_3\right| =& 0. 
\end{split}
\end{equation}
\end{lemma}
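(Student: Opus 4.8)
The plan is to reduce the statement, by successive applications of dominated convergence, to the elementary weak-$*$ convergence of a purely $Y$-periodic function, and to absorb the $\e$-dependence of the test functions through compactness. Write $S^\e_3=\int_0^T\psi(t)\,g^\e(t)\,dt$ with
$$
g^\e(t)=\int_D\bigl(\o{\alpha^\e}(\o u(t))-\o\alpha(\o u(t))\bigr)\,\o u(t)\,\phi^\e\,dx .
$$
Since $\alpha$ is bounded, $\o u\in L^\infty(\Omega;C([0,T];L^2(D)))$ and $\phi^\e$ is bounded in $H^1_0(D)\hookrightarrow L^6(D)$, we have $|g^\e(t)|\le C$ uniformly in $\e,t,\omega$, whence $\E|S^\e_3|\le\int_0^T|\psi(t)|\,\E|g^\e(t)|\,dt$ with integrable dominating function $C|\psi|$. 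Thus it suffices to prove $\E|g^\e(t)|\to0$ for a.e.\ $t$, and since $|g^\e(t)|\le C$ almost surely, it suffices by dominated convergence in $\omega$ to prove $g^\e(t)\to0$ almost surely for each fixed $t$. To prove $\E|S^\e_3|\to0$ it is enough that every subsequence has a further subsequence along which the limit holds; so I fix a subsequence and, using Rellich's compact embedding $H^1_0(D)\hookrightarrow L^2(D)$ together with the uniform $H^1_0$-bound, extract a further subsequence (not relabeled) with $\phi^\e\to\phi$ strongly in $L^2(D)$.

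Next, fix $t$ in the full-measure set where $\o u(t)\in L^3(D)$ (this regularity is available from the uniform estimates in the setting where the lemma is applied) and fix $\omega$; set $u:=\o u(t)$. By the definitions \eqref{defoalphae}--\eqref{defoalpha} and Fubini (justified since $|\alpha|\le\|\alpha\|_\infty$ and $u\phi^\e\in L^1(D)$), the spatial integral in $g^\e(t)$ equals $\int_{L^2(D)}K^\e(\eta)\,d\mu^{u}(\eta)$ with
$$
K^\e(\eta)=\int_D\Bigl(\alpha\bigl(\tfrac{x}{\e},\eta(x)\bigr)-\int_Y\alpha(y,\eta(x))\,dy\Bigr)\,u(x)\,\phi^\e(x)\,dx .
$$
Because $|K^\e(\eta)|\le 2\|\alpha\|_\infty\|u\phi^\e\|_{L^1}\le C$, a further dominated convergence with respect to the probability measure $\mu^{u}$ reduces the claim to $K^\e(\eta)\to0$ for $\mu^{u}$-a.e.\ $\eta$. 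Here $u\phi^\e$ is bounded in $L^2(D)$ (from $u\in L^3$, $\phi^\e\in L^6$), so the Lipschitz property of $\alpha$ gives $|K^\e(\eta)-K^\e(\eta')|\le 2[\alpha]\,\|u\phi^\e\|_{L^2}\,\|\eta-\eta'\|_{L^2}\le C\|\eta-\eta'\|_{L^2}$ uniformly in $\e$; it therefore suffices to establish $K^\e(\eta)\to0$ for $\eta$ in the dense class of simple functions.

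For a simple $\eta=\sum_j r_j\mathbf{1}_{D_j}$ one has $K^\e(\eta)=\sum_j\int_{D_j}\bigl(\alpha(\tfrac{x}{\e},r_j)-\int_Y\alpha(y,r_j)\,dy\bigr)\,u\,\phi^\e\,dx$, where now each oscillating factor $x\mapsto\alpha(\tfrac{x}{\e},r_j)$ is the $\e$-rescaling of the \emph{fixed} $Y$-periodic function $\alpha(\cdot,r_j)\in L^\infty_\#(Y)$ whose mean is $\int_Y\alpha(y,r_j)\,dy$. By the standard weak-$*$ convergence of periodic functions (\cite{A-2s}, Lemma~1.3) this factor tends to $0$ weak-$*$ in $L^\infty(D)$. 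Splitting $u\phi^\e=u(\phi^\e-\phi)+u\phi$, the first contribution is bounded by $2\|\alpha\|_\infty\|u\|_{L^2}\|\phi^\e-\phi\|_{L^2}\to0$ (strong $L^2$ convergence of $\phi^\e$), while the second is the pairing of a weak-$*$ null sequence in $L^\infty(D)$ against the fixed $L^1$ function $u\phi\,\mathbf{1}_{D_j}$, hence also $\to0$. Summing over the finitely many $j$ yields $K^\e(\eta)\to0$, which closes the chain of reductions and proves \eqref{convre'}. The main obstacle is exactly the interaction of the homogenization oscillation with the merely weakly convergent test functions $\phi^\e$, compounded by the fact that $\alpha$ is only measurable in the fast variable $y$ (so the usual two-scale admissibility, which needs continuity in one variable, does not apply directly); both difficulties are overcome above by the compactness $H^1_0\hookrightarrow L^2$ and by the Fubini/reduction-to-simple-functions device, which turns the oscillating factor into a genuine function of $x/\e$ alone.
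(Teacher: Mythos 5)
Your proof is sound in substance and shares the paper's overall strategy (approximation, weak-$*$ convergence of periodic oscillations, then dominated convergence over the invariant measure and over $\Omega\times[0,T]$), but it executes the key step along a genuinely different route. The paper fixes $z\in L^2(D)$ and proves that the $L^2(D)$-valued function $F^\e_t(z)=\bigl(\alpha(\cdot/\e,z(\cdot))-\int_Y\alpha(y,z(\cdot))\,dy\bigr)\o{u}(t)$ converges weakly to $0$ by approximating \emph{both} $z$ and $\o{u}(t)$ by continuous functions and invoking Allaire's Lemma 1.3 for Carath\'eodory test functions $(y,x)\mapsto\alpha(y,z_n(x))w_n(x)$; it then integrates in $z$ against $\mu^{\o{u}(t)}$ (Vitali) and finally in $(\omega,t)$. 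You instead pass by Fubini to the scalar functionals $K^\e(\eta)$, approximate $\eta$ by simple functions --- so that only the most elementary version of the periodic lemma is needed, namely that the fixed periodic function $\alpha(\cdot/\e,r)$ converges weak-$*$ in $L^\infty(D)$ to its mean --- and recover general $\eta$ through the uniform-in-$\e$ Lipschitz bound on $K^\e$. A genuine merit of your write-up is the treatment of the $\e$-dependent test functions: the Rellich extraction $\phi^\e\to\phi$ strongly in $L^2(D)$ along subsequences and the splitting $u\phi^\e=u(\phi^\e-\phi)+u\phi$ are exactly what is needed to pair the weak-$*$ null oscillating factor with a non-fixed test function, a point the paper's own proof glosses over by writing a fixed $\phi$ throughout (harmless in the application, where $\phi^{\e}=\phi+\e\nabla\phi\cdot\chi^{*\e}\to\phi$ strongly, but not addressed in the proof of the lemma as stated).

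One caveat deserves flagging: your equi-Lipschitz estimate for $K^\e$ requires $\|u\phi^\e\|_{L^2}\le C$ uniformly in $\e$, for which you assume $\o{u}(t)\in L^3(D)$; this is not among the lemma's hypotheses (there $\o{u}(t)$ is only in $L^2(D)$), and you justify it only by appealing to the setting where the lemma is applied. As written this is a small gap, but it is repaired inside your own scheme by one more approximation: truncate $u_M=u\mathbf{1}_{\{|u|\le M\}}$, note that $\sup_{\e,\eta}\bigl|K^\e(\eta)-K^\e_M(\eta)\bigr|\le 2\|\alpha\|_\infty\,\|u-u_M\|_{L^2}\,\sup_\e\|\phi^\e\|_{L^2}$ is small uniformly in $\e$ and $\eta$, and run your argument verbatim for $u_M\in L^\infty(D)\subset L^3(D)$; letting $M\to\infty$ at the end gives the lemma under its stated hypotheses. (The paper's proof has the mirror-image integrability issue in its approximation error, where the factor $|z_n-z|\,|u|$ must be paired with $\phi$, and is likewise resolved only because the test function in the application is in $C_0^\infty(D)$.)
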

\begin{proof}
For any $t\in[0, T]$ consider the sequence of functions $F^\e_t: L^2(D) \to L^2(D)$, 
$$F^\e_t(z) (x) = \left(\alpha \left(\dfrac{x}{\e}, z(x)\right) - \int_Y\alpha \left(y, z(x)\right) \right) u(t, x). $$
We show now that for any $z\in L^2(D)$, for every $t\in[0, T]$ and a. e. $\omega\in\Omega$, $F^\e_t (z)$ converges in $L^2(D)$ to $0$. We we fix $\omega$ and $t$ and let $z_n$ and $w_n$ two sequences of continuous functions converging in $L^2(D)$ to $z$ and $u(t)$. We use Lemma 1. 3 from \cite{A-2s} and obtain that the sequence $ F^\e_n(x) = \left(\alpha \left(\dfrac{x}{\e}, z_n(x)\right) - \displaystyle\int_Y\alpha \left(y, z_n(x)\right) \right) w_n(x)$ converges when $\e\to 0$ to $0$ in $L^2(D)$. 

But $$\left|F^\e_n(x) - F^\e_t(z)(x) \right| \leq c |w_n(x)- u(t, x)| + c|z_n(x)-z(x)|, $$
based on the Lipschitz condition and boundedness for $\alpha$. 
We deduce that $F^\e_t (z)$ converges in $L^2(D)$ to $0$. The sequence being also uniformly bounded by $\| C \o{u} \|_{L^\infty (\Omega; C([0, T]; L^2(D)))}$, Vitali's convergence theorem implies that the sequence of the integrals with respect to the probability measure on $L^2(D)$, $\mu^{\o{u}(t)}$ also converge to $0$ in $L^2(D)$:
$$\lim_{\e\to 0}\int_{L^2(D)} F^\e_t (z) d\mu^{\o{u}(t)} dz = 0 \ in \ L^2(D), $$
which can be rewritten as
$$\lim_{\e\to 0} \o{\alpha^\e}(\o{u}(t)) \o{u}(t) - \o{\alpha}(\o{u}(t)) \o{u}(t) = 0 \ in \ L^2(D). $$
This implies that $\mathbb{P}$ a. s. and for every $t\in[0, T]$
$$\lim_{\e\to 0} \int_D \left(\o{\alpha^\e}(\o{u}(t)) \o{u}(t)- \o{\alpha}(\o{u}(t)) \o{u}(t)\right)\phi \psi' (t)dx = 0, $$
with the sequence being also uniformly bounded. We apply the bounded convergence theorem and integrate over $\Omega \times [0, T]$ to get the result. 
\end{proof}
\subsection{Proof of Theorem \ref{thconv1}}
\label{subs45}
\begin{proof}
The uniform bounds \eqref{est2'} and \eqref{est3'} hold for $u^\e$. So the sequence is a. e. $\omega\in\Omega$ contained in a compact set $\mathcal{K}$ of $C([0, T];L^2(D))$  so the sequence is tight in $C([0, T];L^2(D)) $. Then, there exists a subsequence $u^{\e'}$ and a random element $\o{u} \in C([0, T];L^2(D))$ such that $u^{\e'}$ converges in distribution to $\o{u}$ in $C([0, T];L^2(D))$. Skorokhod theorem gives us the existence of a subsequence $u^{\e''}$ and another sequence $\wt{u^{\e''}}$ with the same distribution as $u^{\e''}$ defined on another probability space $\wt{\Omega}$ that converges point-wise to some $\wt{\o{u}}$, a random element of 
$C([0, T];L^2(D))$ with the same distribution as $\o{u}$. Since $u^{\e''}$ and $\wt{u^{\e''}}$ have the same distribution, then $\wt{u^{\e''}}$ is also bounded in $L^\infty(\wt{\Omega}, L^2(0, T;H_0^1(D))$. Hence, (up to another subsequence) and a.s. $\wt{u^{\e''}}$ converges to $\wt{\o{u}}$ weakly in $L^2(0, T;H_0^1(D))$. It   follows from here that a.s.,  $\wt{\o{u}}$ belongs to $\mathcal{K}$ so $\wt{\o{u}}\in L^\infty(\wt{\Omega}, L^2(0, T;H_0^1(D))$ and $\o{u}\in L^\infty(\Omega, L^2(0, T;H_0^1(D))$. 

In order to get the macroscopic equation for $\wt{\o{u}}$ we use the oscillating test function method of Tartar..., we use in the variational formulation \eqref{weaksole} for $u^{\e''}$ a test function $\phi^{\e''}$ of the form $\phi + \e'' \nabla \phi \cdot \chi^{*\e''}$ where $\phi \in C_0^\infty(D)$, multiply it with $\psi'$ where $\psi \in C_0^1(0, T)$ to get:
\begin{equation}
\label{weaksole'}
\begin{split}
&\int_0^T \int_D u^{\e''}(t) \phi^{\e''} \psi'(t) dx dt - \int_0^T\int_D u^{\e''}_0 \phi^{\e''} \psi'(t) dx dt -\int_0^T \int_D A^{\e''} \nabla u^{\e''}(t)\nabla\phi^{\e''}\psi(t)dx dt \\
+& \int_0^T \int_D \alpha^{\e''}(v^{\e''}(t)) u^{\e''}(t) \phi^{\e''}\psi(t)dx dt =-\int_0^T \int_D f(t) \phi^{\e''} \psi(t) dx dt. 
\end{split}
\end{equation}
We notice that
\begin{equation}
\label{convre'''}
\lim_{\e''\to 0} \E\left|\displaystyle\int_0^T \int_D \left(\alpha^{\e''}(v^{\e''}(t))u^{\e''}(t) - \o{\alpha}(\o{u}(t))\o{u}(t)\right)\phi^{\e''} \psi(t)dx dt \right| = 0. 
\end{equation}
We write:
\begin{equation}\nonumber
\begin{split}
\alpha^{\e''}(v^{\e''})u^{\e''} - \o{\alpha}(\o{u})\o{u} &= \alpha^{\e''}(v^{\e''})u^{\e''} - \o{\alpha^{\e''}}(u^{\e''})u^{\e''} + \o{\alpha^{\e''}}(u^{\e''})u^{\e''}-\o{\alpha^{\e''}}(\o{u})u^{\e''}\\
&+\o{\alpha^{\e''}}(\o{u})u^{\e''} -\o{\alpha^{\e''}}(\o{u})\o{u}+ \o{\alpha^{\e''}}(\o{u})\o{u} -\o{\alpha}(\o{u})\o{u}, 
\end{split}
\end{equation}
so
\begin{equation}\nonumber
\begin{split}
\int_0^T \int_D \alpha^{\e''}(v^{\e''}(t))u^{\e''}(t)\phi^{\e''} \psi(t) dx dt = S^{\e''}_1 + S^{\e''}_2 +S^{\e''}_3, 
\end{split}
\end{equation}
where
\begin{equation}
\label{s1}
S^{\e''}_1 = \int_0^T \int_D \left(\alpha^{\e''}(v^{\e''}(t)) - \o{\alpha^{\e''}}(u^{\e''}(t))\right)u^{\e''}(t)\phi^{\e''} \psi(t) dxdt, 
\end{equation}
\begin{equation}
\label{s2}
S^{\e''}_2 = \int_0^T \int_D \left( \o{\alpha^{\e''}}(u^{\e''}(t))u^{\e''}(t) - \o{\alpha^{\e''}}(\o{u}(t))\o{u}(t) \right)\phi^{\e''} \psi(t)dx dt, 
\end{equation}
and
\begin{equation}
\label{s3}
S^{\e''}_3 = \int_0^T \int_D \left(\o{\alpha^{\e''}}(\o{u}(t))\o{u}(t) - \o{\alpha}(\o{u}(t))\o{u}(t)\right)\phi^{\e''} \psi (t)dxdt. 
\end{equation}
Lemmas \ref{lemmaconv1}, \ref{lemmaconv1''} and \ref{lemmaconv1'} give that $\displaystyle\lim_{\e''\to 0 }\E|S^{\e''}_1|=\lim_{\e''\to 0 }\E|S^{\e''}_2|=\lim_{\e''\to 0 }\E|S^{\e''}_3|=0$ so we have \eqref{convre'''} which together with \eqref{weaksole'} gives 
\begin{equation}\label{eq6}
\begin{split}
\lim_{\e''\to 0} \E &\left|\int_0^T \int_D u^{\e''}(t) \phi^{\e''} \psi'(t) dx dt - \int_0^T\int_D u_0 \phi \psi'(t) dx dt - \int_0^T \int_D A^{\e''} \nabla u^{\e''}(t)\nabla\phi^{\e''}\psi(t)dx dt+\right. \\
&\left. \int_0^T \int_D\o{\alpha}(\o{u}(t))\o{u}(t)\phi^{\e''} \psi(t)dx dt+ \int_0^T \int_D f(t)\phi\psi(t) dxdt\right|=\\
\lim_{\e''\to 0} \wt\E &\left|\int_0^T \int_D \wt{u^{\e''}}(t) \phi^{\e''} \psi'(t) dx dt - \int_0^T\int_D u_0 \phi \psi'(t) dx dt - \int_0^T \int_D A^{\e''} \nabla \wt{u^{\e''}}(t)\nabla\phi^{\e''}\psi(t)dx dt+\right. \\
&\left. \int_0^T \int_D\o{\alpha}(\o{u}(t))\o{u}(t)\phi\psi(t)dx dt+ \int_0^T \int_D f(t)\phi\psi(t) dxdt\right|=0. 
\end{split}
\end{equation}
We make now several calculations under the integral in the above equation and then pass to the limit pointswise in $\wt{\omega} \in \wt{\Omega}$:
\begin{equation*}
\begin{split}
& \int_0^T \int_D A^{\e''} \nabla \wt{u^{\e''}}\nabla\left(\phi +\e''\nabla\phi\cdot \chi^{*\e''} \right)\psi(t)dxdt= \\
& \int_0^T \int_D A^{\e''} \nabla \wt{u^{\e''}}\left(\nabla\phi +\e''\nabla\nabla\phi \chi^{*\e''} +\e'' \nabla\phi \nabla\chi^{\e''}\right)\psi(t)dxdt= \\
& \int_0^T \int_D A^{\e''} \nabla \wt{u^{\e''}}\nabla\phi\psi(t) +\e''A^{\e''} \nabla \wt{u^{\e''}}\nabla\nabla\phi \chi^{*\e''}\psi(t) +\e'' A^{\e''} \nabla \wt{u^{\e''}}\nabla\phi \nabla\chi^{\e''*}\psi(t)dxdt= \\
& \int_0^T \int_D A^{\e''} \nabla \wt{u^{\e''}}\nabla\phi\psi(t) +\e''A^{\e''} \nabla \wt{u^{\e''}}\nabla\nabla\phi \chi^{*\e''}\psi(t) +\e'' A^{\e''}\nabla\chi^{\e''} \nabla \wt{u^{\e''}}\nabla\phi\psi(t)dxdt. 
\end{split}
\end{equation*}
From the equation \eqref{cellpre} satisfied by $\chi^{\e''}$ we have that
\begin{equation*}
\begin{split}
\int_D A^{\e''}\left( I+ \e'' \nabla \chi^{\e''} \right) \nabla \left(\wt{u^{\e''}} \nabla \phi \right) dx&=0\Rightarrow\\
\int_D \left(A^{\e''} \nabla \wt{u^{\e''}} \nabla \phi +\e'' A^{\e''} \nabla\chi^{\e''} \nabla \wt{u^{\e''}} \nabla \phi \right)dx&=-\int_D A^{\e''} \wt{u^{\e''}} \nabla\nabla \phi dx - \int_D \e'' A^{\e''}\nabla\chi^{\e''} \wt{u^{\e''}} \nabla\nabla\phi dx, 
\end{split}
\end{equation*}
so we get that
\begin{equation*}
\begin{split}
& \int_0^T \int_D A^{\e''} \nabla \wt{u^{\e''}}\nabla\left(\phi +\e''\nabla\phi\cdot \chi^{*\e''} \right)\psi(t)dxdt= \\
& \int_0^T \int_D \left(\e''A^{\e''} \nabla \wt{u^{\e''}}\nabla\nabla\phi \chi^{*\e''}\psi(t)- A^{\e''} \wt{u^{\e''}} \nabla\nabla \phi \psi(t)- \e'' A^{\e''}\nabla\chi^{\e''} \wt{u^{\e''}} \nabla\nabla\phi\psi(t)\right) dxdt=\\
& \int_0^T \int_D \left(\e''A^{\e''} \nabla \wt{u^{\e''}}\nabla\nabla\phi \chi^{*\e''}\psi(t)- A^{\e''}\left(I + \e''\nabla\chi^{\e''}\right) \wt{u^{\e''}} \nabla\nabla \phi \psi(t) \right)dxdt, 
\end{split}
\end{equation*}
and will converge pointwise in $\wt{\Omega}$ (see \cite{A-2s} Lemma 1. 3) to
$$ \int_0^T \int_D - \o{A} \wt{\o{u}} \nabla\nabla \phi \psi(t)dxdt = \int_0^T \int_D \o{A} \nabla \wt{\o{u}} \nabla \phi \psi(t)dxdt. $$
The sequence given in \eqref{eq6} above converges in $L^1(\wt{\Omega})$ to $0$ but also pointwise in $\wt{\Omega}$ to 
$$\int_0^T \int_D \left( \wt{\o{u}}(t)\phi\psi'(t) - u_0 \phi\psi' (t) -\o{A} \nabla \wt{\o{u}}\nabla\phi\psi(t) 
+ f(t)\phi\psi(t) + \o{\alpha} (\wt{\o{u}}(t)) \wt{\o{u}}(t) \phi\psi(t)\right)dx dt, $$
which means that $\wt{\o{u}}$ is pointwise the weak solution of the deterministic equation \eqref{eqou1} which, according to Theorem \ref{thexunou} has a unique solution, so $\wt{\o{u}}$ and $\o{u}$ are deterministic. Then, the whole sequence $u^{\e''}$ converges to $\o{u}$ in distribution, and since $\o{u}$ is deterministic then the convergence is also in probability see \cite{JP} Theorem 18.3. 
\end{proof}
\section*{Acknowledgements}
Hakima Bessaih was partially supported by NSF grant DMS-1418838.

\end{document}